\documentclass{article} 
\usepackage{iclr2026_conference,times}

\usepackage[utf8]{inputenc}
\usepackage[T1]{fontenc}
\usepackage{hyphsubst}
\usepackage{lmodern}

\usepackage[english]{babel}
\usepackage{mathtools}
\usepackage{bef_aras_nothm}
\usepackage{amsmath,amsthm,amssymb}
\usepackage{thmtools}
\usepackage{thm-restate}
\usepackage{amsfonts}
\usepackage{microtype}
\usepackage{float}
\usepackage{hyperref}
\usepackage{1710}
\usepackage{subfig}
\usepackage{array}
\usepackage{tikz}
\usepackage{verbatim}
\usepackage{afterpage}
\usepackage{braket}

\hypersetup{
     colorlinks   = true,
     citecolor    = blue
}

\usetikzlibrary{arrows,calc}

\tikzset{
  >=stealth',
  help lines/.style={dashed, thick},
  axis/.style={<->},
  important line/.style={thick},
  connection/.style={thick, dotted},
}

\newcounter{ArasCounter}

\newcounter{RumaCounter}

\newcounter{SeanCounter}

\newcommand{\revtwo}[1]{{#1}}

\makeatletter
\g@addto@macro{\thm@space@setup}{\thm@headpunct{}}
\makeatother

\numberwithin{equation}{section}

\allowdisplaybreaks

\newcommand{\Qvec}{\mathbf{Q}}

\newcommand{\abs}[1]{\left\lvert#1\right\rvert}
\newcommand{\dx}{\,{\rm dx}}

\newcommand{\norm}[1]{\left\lVert#1\right\rVert}

\begin{document}

\title{Deflation-PINNs: Learning Multiple Solutions for PDEs and Landau-de Gennes}


\author{Sean Disarò\thanks{seandisaro@gmail.com}, Ruma Rani Maity\thanks{Department of Mathematics, Indian Institute of Science, Bangalore, 560012, India, Email:  rumaranim@iisc.ac.in},
Aras Bacho\thanks{Department of Computing and Mathematical Sciences, California Institute of Technology, CA, USA, Email: bacho@caltech.edu}}

\date{}
\iclrfinalcopy
\maketitle

\begin{abstract}
Nonlinear Partial Differential Equations (PDEs) are ubiquitous in mathematical physics and engineering. Although Physics-Informed Neural Networks (PINNs) have emerged as a powerful tool for solving PDE problems, they typically struggle to identify multiple distinct solutions, since they are designed to find one solution at a time. To address this limitation, we introduce Deflation-PINNs, a novel framework that integrates a deflation loss with an architecture based on PINNs and Deep Operator Networks (DeepONets).

By incorporating a deflation term into the loss function, our method systematically forces the Deflation-PINN to seek and converge upon distinct finitely many solution branches. We provide \revtwo{theoretical results on the approximation capabilities} of our model and demonstrate the efficacy of Deflation-PINNs through numerical experiments on the Landau-de Gennes model of liquid crystals, a system renowned for its complex energy landscape and multiple equilibrium states\revtwo{, and on an Allen--Cahn benchmark whose solution set is provably known}. Our results show that Deflation-PINNs can successfully identify and characterize multiple distinct crystal structures\revtwo{: a single unsupervised run recovers all six stable states of the benchmark, each branch certified to lie in the basin of attraction of a different equilibrium, and the discovered branches are refined to percent-level accuracy by a purely neural Deflation--Deep-Ritz stage and to the accuracy of a mesh-converged reference by a classical solver that they initialize}
\end{abstract}


\textbf{Keywords}
Physics-Informed Machine Learning, Neural Networks, DeepONets, Deep-Ritz Method, Nonlinear Partial Differential Equations, Landau-de Gennes, Allen-Cahn

\section{Introduction}
Partial Differential Equations (PDEs) have a wide range of applications in many disciplines. Especially in physics, all major theories are formulated in the language of PDEs. However, not all PDEs can be solved analytically. Thus, it is important to improve and provide new numerical methods to find approximated solutions for PDEs, making it possible to apply these solutions in engineering and other scientific fields. With increasing computational power and with the rise of machine learning, there have been great ideas for solving PDEs with neural networks. Such methods include the Deep Ritz method \cite{DeepRitzPaper}, the Deep Galerkin method \cite{DeepGarlekinMethod}, Physics-Informed Neural Networks (PINNs) \cite{PINNOriginalPaper, PINNPaperPart1,PINNPaperPart2, lorenz24}, Variational PINNs (VPINNs) \cite{VPINNPaper} and Operator Learning \cite{FNO21,DeepONetsOriginal,bach25}. A notoriously hard-to-compute field in which it is necessary to solve PDEs numerically is the Landau-de Gennes (LdG) theory \cite{LDGTheoryPaper}, which is used to describe liquid crystals. In this context, some formulations of problems have a discrete number of solutions, which represent liquid crystal structures.

In this paper, we will explore a new method, which is based on
Physics-Informed Machine Learning (PIML) and which is designed to find solutions to an LdG problem, which was also investigated in \cite{MultistabilityApalachong}. Our framework is designed to be fully unsupervised and \revtwo{learns finitely many distinct solution branches of the LdG problem at once; for the benchmark considered here, the reported run recovers all six equilibrium states known from the FEM literature, and a certification-based restart protocol makes this completeness reproducible across random initializations.} It works by combining a PIML loss with a novel loss function, which ensures that the solutions found are distinct from each other. Furthermore, we establish universality by constructing our architecture as a special case of DeepONets—a neural network framework for learning operators between function spaces that is known to be universal \cite{DeepONetsOriginal, ChenChenApproxOP}.
With our approach, we leverage the strengths of PINN frameworks to capture multiple solution branches of ODEs/PDEs. As argued in \cite{multipleSolPINN}, such coarse approximations are particularly valuable when used as initializations for conventional numerical solvers, which can subsequently refine them into high-precision solutions. 

\subsection{Related Work}
\label{subsec:RelatedWork}
There has already been research which aims to find a finite number of solutions for a PDE. We give a brief summary of related work.

\textbf{HomPINNs.} In \cite{HomPINNs} the authors present the so-called Homotopy Physics-Informed Neural Networks (HomPINNs), which are similar to what we want. They are designed to allow for a discrete number of solutions for a PDE. Their approach, however, aims to fit data from observations and additionally incorporating a Physics-Informed loss in their total loss. Thus, this can be viewed as a semi-supervised setting.

\textbf{Learning Multiple Solutions from Random Initializations.} In \cite{multipleSolPINN} the authors explore a different setting, in which they train many conventional Physics-Informed Neural Networks (PINNs) and they initialize them with different random parameters in the hope that through this random initialization, the optimization process yields different solutions. Afterwards, the different solutions that were obtained were analyzed and compared to see which solution was found how often. The advantage of this approach is that it is easily scalable as one can implement many PINNs in parallel. However, since we cannot guarantee that the different solutions the PINNs find are distinct, this approach is also very prone to redundancy and may lead to non-optimal use of resources.

\textbf{Neural Networks for Nematic Liquid Crystals.} In \cite{NematicLiquidCrystalsNNPaper}, the authors solve a Landau-de Gennes  problem with neural networks using a loss function based on an energy functional. However, unlike our approach, this approach is not qualified to distinguish and find multiple solutions for an LdG problem.
In \cite{NatureLDGNeuralNetworks}, the authors use Convolutional Neural Networks (CNNs), to learn properties about liquid crystals. As the use of CNNs suggests, this work uses images of liquid crystals as inputs to predict properties like the order parameter of simulated nematic liquid
crystals.

\textbf{DeepONets.} \textit{Deep Operator Networks (DeepONets)} are a neural network framework for learning operators between function spaces. They were first introduced by \cite{DeepONetsOriginal} and have since been thoroughly studied in research. For a brief introduction to DeepONets, we refer to Appendix \ref{sec:DeepOnets} or to \cite{errorEstimationDeepONet} for a detailed survey on DeepONets.

\subsection{Overview}
In Section \ref{sec:methodology} we briefly recall the concepts of physics-informed machine learning. Then we proceed with presenting model architectures that are implemented with a hard constraint for Dirichlet conditions\revtwo{; the general radial-extrapolation construction behind our boundary-data extension is deferred to Appendix \ref{app:radial}}. In the following Subsection \ref{Subsection Deflation-PINNs}, we simplify the DeepONet architecture to fit our needs, and we argue that, for our purposes, we still maintain universality for the simplified model. More precisely, the simplified model will be defined in a way that it can only approximate a fixed number of functions. Moreover, we will introduce a new loss term, which makes the different solutions, which we want to approximate with our model, distinguishable. We call this new loss term \textbf{Deflation loss} and the total model will be called a \textbf{Deflation-PINN}. We will also provide a theoretical analysis on the approximation capabilities of our new model.\\
In the next section, we quickly recall the Landau-de Gennes model and introduce the concrete problem which we want to solve with our Deflation-PINN. \revtwo{We present the six solution branches discovered by a single unsupervised run, certify their basin membership against a mesh-converged finite-difference reference by a gradient-flow/Newton census, and refine them in two ways: by a purely neural Deflation--Deep-Ritz stage (Section \ref{subsec:DDR}) and by a classical solver that the network initializes, which reaches reference accuracy (Section \ref{subsec:refinement}). Section \ref{subsec:AllenCahn} adds a second benchmark, an Allen--Cahn problem on the unit disk whose solution set is provably complete; the choice of the deflation parameter $d_{min}$, a restart protocol with an unsupervised stopping rule, and convergence and deflation-variant studies are treated in Section \ref{subsec:refinement} and Appendix \ref{app:convergence}.} \revtwo{Finally, we close with a discussion and future directions}.

\section{Methodology}
\label{sec:methodology}
In this section, we discuss the model that we use in this paper, and for this we will recall fundamental concepts including Physics-Informed Machine Learning and neural networks with hard constraints.
\subsection{Physics-Informed Machine Learning}
The Idea of \textit{Physics-Informed Machine Learning (PIML)} was first presented in \cite{PINNOriginalPaper}.
The goal is to encode a PDE problem into a loss function, i.e let
\begin{equation}\label{DeRyckBVP}
    \begin{split}
        &\mathcal{D}[u](x)=0,~~~~\qquad\text{for }x\in\Omega\\
        &\mathcal{B}[u](s)=\psi(s),\qquad\text{for }s\in\partial\Omega,
    \end{split}
\end{equation}
where $\Omega\subset\mathbb{R}^d, d\in \mathbb{N}$ is compact and $\mathcal{D}, \mathcal{B}$ are differential operators and boundary operators, $u:\Omega\rightarrow\mathbb{R}^m,m\in\mathbb{N}$ is the solution of the PDE, $\psi:\partial\Omega\rightarrow\mathbb{R}^m$ specifies the boundary condition.
Then we can define the  \textit{generalization error} for (\ref{DeRyckBVP}) as
\begin{align}
\label{Generalization error}
\begin{split}
    \mathcal{E}_G(\theta):= \alpha_1 \cdot \int_{\Omega}|\mathcal{D}[u_\theta](x)|^2dx &+ \alpha_2 \cdot \int_{\partial\Omega}|\mathcal{B}[u_\theta](s)-\psi(s)|^2ds,
\end{split}
\end{align} where $u_\theta$ is a trainable model parameterized by $\theta$ in some parameter space $\Theta$, and $\alpha_1,~\alpha_2$ are hyperparameters.
The PIML framework seeks to minimize the difference between a model $u_{\theta}$ and the actual solution $u$ through the loss function above.
In other words, if the solution to (\ref{DeRyckBVP}) is unique, then the generalization error is zero if and only if $u_\theta = u$. This is due to the fundamental lemma of the calculus of variation.

When the model used is a neural network, then the respective neural network with PIML loss is usually referred to as a \textit{ Physics-Informed Neural Network (PINN)}.

To actually compute the generalization error $\mathcal{E}_G(\theta)$, we need to discretize it. The typical framework for this is to first discretize the differential operators in the first integral with automatic differentiation (AD) \cite{ADBook} and then discretize the integrals. For the discretization of the integrals, one can use e.g. a Riemann sum.
For more literature on PIML, we refer to \cite{ReviewPIML2021,wang2023expertsguidetrainingphysicsinformed,scholl26, SBBK23UPLL} and the references therein.

\revtwo{Note that instead of working with an $L^2$ error for $\mathcal{E}_G$, one can use any other $L^q$ norm, $1 \le q < \infty$, i.e.
\begin{align}
\begin{split}
    \mathcal{E}_{G,q} (\theta):= \alpha_1 \cdot \int_{\Omega}| \mathcal{D}[u_\theta](x)| ^q dx &+ \alpha_2 \cdot \int_{\partial\Omega}|\mathcal{B}[u_\theta](s)-\psi(s)|^q ds,
\end{split}
\end{align} where the appropriate choice of $q$ may depend on the regularity of the solution and of the residual.
}

\subsection{Hard Constraints For Boundary Conditions}
\label{Section Hard Constraints For Boundary Conditions}
Usually, when a loss function consists of multiple objectives, e.g., such as the two integrals in $\mathcal E _G$, it is not trivial how to choose the respective hyperparameter $\alpha _i$. This was discussed in more detail in \cite{PinnHyperParamTuning, hyperparamOptGeorgePINNs}. In such multi-objective loss functions, it is desirable to simplify the problem so that we have fewer objectives to approximate.\\
One way to achieve this in the PINN framework is to adapt the architecture of the neural network such that it automatically satisfies the boundary conditions, and thus the generalization error reduces to only $ 
\mathcal{E}_G(\theta):= \int_{\Omega}|\mathcal{D}[u_{\theta}](x)|^2dx.$
To ensure this, we can choose the following ansatz. 
\[
    \tilde f_\theta (x) := \omega (x) \cdot NN_\theta(x) + b(x) \approx u (x)
\]
where $\omega > 0$ on the interior $\text{int}(\Omega) $ and $\omega = 0$ on $\partial \Omega$. $b: \bar \Omega = \partial \Omega \cup \Omega \to \mathbb R$ should be at least continuous and chosen so that $b\vert _{\partial \Omega} = \psi$.
$NN_\theta(x)$ is a neural network and the approximation is true for a suitable choice of parameters $\theta$. Note that $\tilde f_\theta$ satisfies the exact Dirichlet condition independently of the choice of $\theta$.

The exact imposition of boundary constraints has been discussed extensively in literature; see, e.g., \cite{exactImpositionOfBDCon,DeepTheoryFuncConnections,hPINNPaper}. 

\revtwo{For the boundary-vanishing factor $\omega$ we adopt the $C^\infty$-construction of \cite{lu2021deepxde} on the unit square (Section \ref{sec:Experiments}); on the unit disk (Section \ref{subsec:AllenCahn}) the radial construction below yields $\omega(x) = 1 - \vert x \vert^2$ directly. For the extension $b$ of the (generally non-homogeneous) boundary data we use, in the discovery stage, a radial construction that applies to arbitrary bounded star-shaped domains (the Deflation--Deep-Ritz refinement of Section \ref{subsec:DDR} uses the harmonic extension of the same data instead); since only its two concrete instances are needed in this paper---on the unit square and on the unit disk---the general construction, including the radial boundary distance function and its continuity and smoothness properties, is deferred to Appendix \ref{app:radial}.}

\subsection{Deflation-PINNs}
\label{Subsection Deflation-PINNs}
There are PDE problems, which have a finite number of solutions $K>1$. For examples of such problems, we refer to the section where we present our numerical results. However, this kind of problem is not covered by the classical PINN framework, as it is only suitable for learning one solution at a time. Although the DeepONet framework can learn multiple solutions for a class of PDE problems, it is unlikely to be the most efficient choice, since it is built to learn infinitely many solutions in the form of an operator between function spaces. Thus, it may be seen as unnecessarily expensive if we only want finitely many solutions.
This section introduces \textit{Deflation-PINNs}, i.e. our new method for learning multiple solutions with neural networks.
The method consists of two new key points.
\begin{enumerate}
    \item We will introduce a neural network architecture to learn multiple solutions at the same time. The architecture will be a special case of the DeepONet architecture but designed in such a way that it only admits a finite number of solutions.
    \item We will introduce a loss function that ensures that we learn different solutions for the same problem.
\end{enumerate}

\subsubsection*{1) Architecture}
We start with (unstacked) DeepONets, which want to approximate an operator over a function space, i.e. they map $G:\mathcal A \times \mathbb{R}^d \to \mathbb{R}$, where $\mathcal{A}$ is a space of functions. The output then is of the form
$ \sum \limits _{i = 1} ^p \tau _ i (x) \cdot \beta _i(f) \approx G(f) (x)$
where $x \in \mathbb{R}^d$ and $f \in \mathcal{A}$. 
The $\tau _i $ are the i-th components of a neural network $ x \mapsto \tau(x) =  (\tau _1 (x), \dots , \tau _p (x))$. The $\beta_i$ use the evaluation of the function at previously fixed sensor points, that is,
take $( \tilde x_i ) _{i = 1} ^ N \in dom(f)$ sensor points, then 
\[f \mapsto (\beta _1 (f), \dots, \beta _p(f)) = (\beta _1 (f(\tilde x_1), \dots , f(\tilde x_N)), \dots, \beta _p(f(\tilde x_1), \dots , f(\tilde x_N)))\]
where $\beta = (\beta_1, \dots, \beta_p): \mathbb{ R}^N \to \mathbb{R}^p$ is a neural network.

However, in our setting, we want to learn only $K \in \mathbb N$ solutions that do not depend on an input function. Thus, we propose, instead of using a neural network $\beta_i: \mathbb R^N \to \mathbb R ^p $, to use only $K$ feature vectors of size $p$ (one feature vector for each solution), which encode the respective function, i.e., we want $\beta^k = (\beta^k _1 , \dots ,\beta ^k _p) \in  \mathbb{R}^ p$. These feature vectors can then be trained as part of the model's parameters. In total, our solution is of the form
\[
    \tilde G(k,x):= \sum \limits _{i = 1} ^p \tau _ i (x) \cdot \beta ^k _i \approx u_k (x),
\]
where $x\in dom(u_k)$ and $u_k$ for $k =1, \dots ,K$ are the $K$ solutions we want to approximate and $\tau_i$ are still neural networks. An illustration of our proposed architecture is given in Figure \ref{fig:DelfationPINNArchitecture}.
\begin{figure}[H]
\includegraphics[width = 1\textwidth]{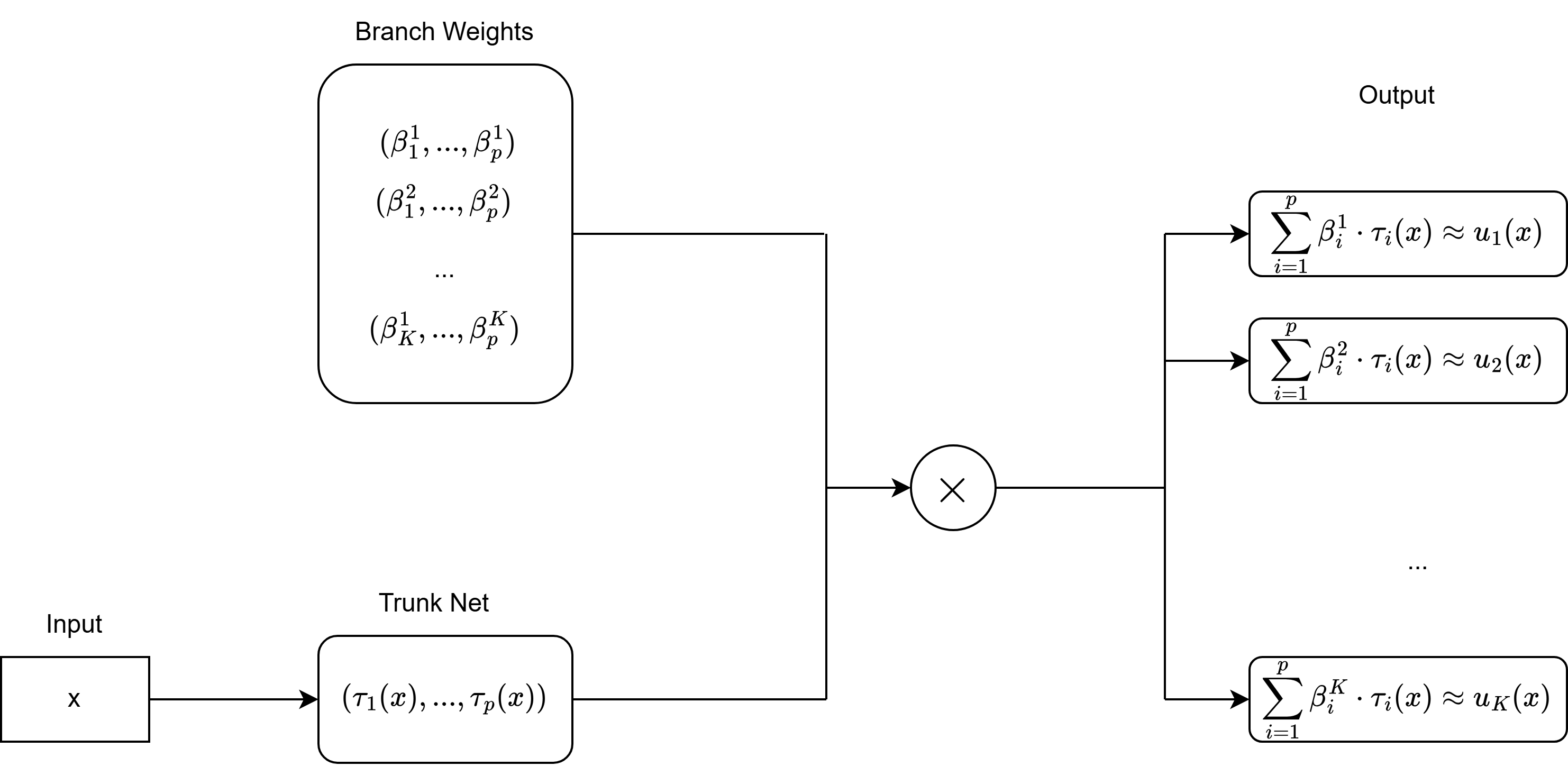}
\caption{Architecture of the Deflation-PINN. Unlike classical DeepONets, the branch weights $\beta_j^i$ are learned and do not use domain dependent sensor points. The trunk net takes the point $x\in \Omega$ at which we want to evaluate the respective solution. The output is computed by taking the dot product of the respective branch weights $(\beta_1^i, ..., \beta_p ^i)$ and the trunk net evaluations $(\tau_1 (x), ..., \tau _p (x) )$ to approximate the solution $u_i(x)$}
\label{fig:DelfationPINNArchitecture}
\end{figure}
The advantage of using fixed feature vectors $\beta ^k \in \mathbb R ^p$ instead of a neural network $\beta : \mathcal A \to \mathbb R^p$ is that we only need $ K \cdot p $ parameters to represent the $K$ functions. We call the $\beta ^k \in \mathbb R ^p$ \textbf{Branch Weights}.

If we were to use the DeepONet architecture, then we would need at least $S \cdot p + p$ parameters (one affine layer from the $S$ sensor values to $p$ features), where $S \in \mathbb N$ is the number of sensor points, which can easily be greater than the number of solutions, that is, $S > K$. Furthermore, the $K$ feature vector directly incorporates the solution structure such that it can be decoded by the architecture, since the feature vectors are trained to do so, which is in contrast to the fixed sensor points, which cannot be changed.

Note that since we have a finite number of solutions, which we want to approximate for our architecture, we still want to approximate a map of compact sets, and thus we can derive universality for this architecture as a special case of Theorem \ref{DeepONetChenChenThm} presented in Appendix \ref{sec:DeepOnets}.

\begin{cor}
    \label{corApproximationDeflationPINN}
    Suppose that $\sigma: \mathbb R\to \mathbb R$ is a continuous and non-polynomial activation function, $\mathbf{K} \subset \mathbb R^d$ is compact and $u_1, \dots, u_N \in C(\mathbf{K})$. 

    Then for all $\varepsilon >0$ there exist $ p\in \mathbb N$, $\zeta _ i \in \mathbb R$, $w_i \in  \mathbb R^d $ and $\beta_n^i \in \mathbb R$ ($i = 1, \dots, p$, $n = 1, \dots, N$) such that for all $n  \in \{1, \dots ,N\}$ and $y \in \mathbf{K}$ we have that
    \[
    \Big|  u_n(y)- \sum \limits _{i=1}^p \beta_n ^i \sigma(w_i \cdot y + \zeta_i) \Big| < \varepsilon.
    \]
\end{cor}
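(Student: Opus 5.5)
The plan is to derive the statement from the Chen--Chen universal approximation theorem for operators (Theorem \ref{DeepONetChenChenThm} in Appendix \ref{sec:DeepOnets}), as the text preceding the corollary suggests. A direct route is also available through the classical one-hidden-layer universal approximation theorem (Leshno--Lin--Pinkus--Schocken), which is the standard ingredient of the Chen--Chen result. The key structural point is that the hidden units $\sigma(w_i\cdot y+\zeta_i)$ must be \emph{shared} across all $n$, and only the outer coefficients $\beta_n^i$ may depend on $n$. This is exactly the trunk/branch split of the Deflation-PINN with $\tau_i(y)=\sigma(w_i\cdot y+\zeta_i)$.

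\textbf{Step 1: approximate each function separately.} For each $n\in\{1,\dots,N\}$, the universal approximation theorem for continuous non-polynomial $\sigma$ on the compact set $\mathbf K$ gives $p_n\in\mathbb N$, weights $w^{(n)}_j\in\mathbb R^d$, biases $\zeta^{(n)}_j\in\mathbb R$ and coefficients $c^{(n)}_j\in\mathbb R$ such that
\[
\sup_{y\in\mathbf K}\Big|u_n(y)-\sum_{j=1}^{p_n}c^{(n)}_j\sigma\big(w^{(n)}_j\cdot y+\zeta^{(n)}_j\big)\Big|<\varepsilon .
\]

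\textbf{Step 2: concatenate the hidden units.} Set $p=\sum_{n=1}^N p_n$ and list all pairs $(w^{(n)}_j,\zeta^{(n)}_j)$ as $(w_i,\zeta_i)_{i=1}^p$. Here the $n$-th function occupies the block of indices $I_n$ of size $p_n$. Define $\beta_n^i=c^{(n)}_j$ if $i\in I_n$ corresponds to the $j$-th unit of the $n$-th block, and $\beta_n^i=0$ otherwise. Then $\sum_{i=1}^p\beta_n^i\sigma(w_i\cdot y+\zeta_i)$ coincides with the $n$-th approximant from Step 1, so the bound holds for all $n$ and all $y\in\mathbf K$ with the same $p$, $w_i$, $\zeta_i$.

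\textbf{Alternative route via Theorem \ref{DeepONetChenChenThm}.} One can also apply Theorem \ref{DeepONetChenChenThm} directly. Take the input space $V=\{e_1,\dots,e_N\}$, which is compact because it is finite, for instance inside $C$ of a finite sensor set, or simply inside $\mathbb R^N$. Define the operator $G(e_n)=u_n$, which is trivially continuous on a finite set. Then set $\beta_n^i$ equal to the branch-net output evaluated at $e_n$.

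I expect no real obstacle. The only point requiring care is confirming that the hidden-layer parameters are independent of $n$, which the concatenation in Step 2 ensures. Another minor point is that, if Theorem \ref{DeepONetChenChenThm} is stated with $\sigma$ of Tauber--Wiener type rather than merely continuous non-polynomial, one should cite the Leshno et al. characterization, which shows these classes coincide for continuous $\sigma$. This is why I prefer the direct Steps 1--2.
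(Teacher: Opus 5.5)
Your proof is correct, but your main argument takes a different route from the paper's. The paper's proof is exactly your alternative route. It applies Theorem \ref{DeepONetChenChenThm} with $K_2=\mathbf K$, a finite (hence compact) set $V=\{v_1,\dots,v_N\}$ of distinct functions in $C(K_1)$, and $G(v_n):=u_n$, which is continuous because $V$ is finite. It then observes that for each fixed $v_n$ the branch factor $\sum_i c_i^k\sigma\big(\sum_j\xi_{ij}^k v_n(x_j)+\theta_i^k\big)$ is a constant, and this constant becomes $\beta_n^k$; your $e_n$ play the role of the $v_n$.

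Your Steps 1--2 work differently. You apply the scalar one-hidden-layer universal approximation theorem to each $u_n$ separately, and you obtain shared hidden units by concatenating the $N$ hidden layers, with $\beta_n^i$ vanishing outside the block $I_n$. This gives a self-contained proof from a more elementary ingredient, namely the scalar density result on which the Chen--Chen theorem itself rests. It yields a width $p=\sum_n p_n$ that is simply additive in the widths needed for the individual $u_n$. It also makes transparent that sharing the trunk units costs nothing in expressivity when there are finitely many targets.

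The paper's route is a two-line reduction instead. It fits the paper's narrative of obtaining the architecture's universality as a special case of DeepONet universality, and the sharing of the trunk units $\sigma(w_k\cdot y+\zeta_k)$ is built into the form of the theorem rather than arranged by hand. Neither route gives anything beyond existence.

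Your Tauber--Wiener caveat is harmless here. Theorem \ref{DeepONetChenChenThm} is stated in the paper for continuous non-polynomial $\sigma$, and for continuous $\sigma$ the two notions coincide anyway, as you note.
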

\begin{proof}
    Apply Theorem \ref{DeepONetChenChenThm} with $K_2 = \mathbf K$, an arbitrary compact $K_1$, a set $V = \{v_1, \dots, v_N\} \subset C(K_1)$ of $N$ distinct functions (finite, hence compact) and the operator $G(v_n) := u_n$, which is continuous because $V$ is finite. For each $n$, the branch factor $\sum_{i} c_i^k \sigma\big(\sum_{j} \xi_{ij}^k v_n(x_j) + \theta_i^k\big)$ multiplying $\sigma(w_k \cdot y + \zeta_k)$ in Theorem \ref{DeepONetChenChenThm} is a constant, which we call $\beta_n^k$; renaming the summation index $k$ to $i$ gives the claim.
\end{proof}

\revtwo{It is also worth noting that choosing a plain MLP as the trunk net is a somewhat arbitrary choice; for practical reasons one could integrate concepts such as Fourier features \cite{FourierFeatures}. In fact, we use Fourier features in Section \ref{subsec:DDR}.}

\subsubsection*{2) Deflation Loss}
We now have an architecture for our problem, but the question remains how to train this. With the PIML loss, we can learn true solutions. However,
we still need a loss, which tells the model that the $K \in \mathbb N$ solutions it learns (with the PIML loss) need to be distinct from one another. More precisely, the solutions $u_k$ that we want to approximate are pairwise at least $d_{min}$ apart, where $d_{min} \le \min \limits _ {i,j \in \{1,\dots, K\} , i\neq j} ||u_i - u_j||_{L^2}$ is a lower bound on their minimal separation. However, we note that, in general, we do not have access to this quantity. For that reason, we will also provide a strategy to estimate an admissible value from the training behavior alone (Appendix \ref{app:convergence}).
\\
Let $\tilde G$ with
\[
    \tilde G(k,x):= \sum \limits _{i = 1} ^p \tau _ i (x) \cdot \beta ^k _i \approx u_k (x)
\]
be our model.
We can rewrite the condition that the solutions be at least $d_{min}$ apart as the following loss function:
\[
    \mathcal L_{Def} (\tilde G) := {2 \over K \cdot (K-1)} \sum _ {i = 1}^K\sum _ {j = i+ 1} ^K \max\Big( 1 - {1 \over d_{min}}||\tilde G(i, \cdot ) - \tilde G(j, \cdot )||_{L^2} , 0 \Big),
\]
i.e., the respective approximated solutions $\tilde G(i, \cdot)$ have at least distance $d_{min}$ between each other in the $L^2$-norm if and only if $\mathcal L_{Def} (\tilde G) = 0$.

Thus, the total loss for training our model $ \tilde G$ can be given as
\[
    \mathcal L _ {total } (\tilde G) := \alpha\cdot \sum \limits _{i =1}^K  \mathcal E_G (\tilde G(i, \cdot)) + \beta\cdot \mathcal L_{Def} (\tilde G)
\]
where $\mathcal E_G$ is the PIML generalization error and for some hyperparameters $\alpha, \beta >0$.
We call this model $\tilde G$ together with the loss function $\mathcal L _ {total }$ a \textbf{Deflation-PINN}. $d_{min}$ is left as a hyperparameter which can be found empirically by testing different values or by some theoretical bound on the distance between solutions. Figure \ref{fig:DelfationPINNArchitecture} illustrates the architecture.

\revtwo{We remark that the hinge on output-field distances used here is one member of a family of admissible deflation terms, and by no means the unique choice. The distance may be measured in other norms; it may be measured in the network's \emph{intermediate representation}---in our architecture, directly between the branch vectors $\beta^k$, which encode the solutions---rather than in the output fields; the hinge may be replaced by scale-free pairwise repulsion terms such as $\sum_{i<j}(d_{min}/\Vert \tilde G(i,\cdot)-\tilde G(j,\cdot)\Vert)^{q}$, $q > 0$, closer in spirit to the deflation operators of Newton-based deflation \cite{Farrell2017} and to diversity losses in machine learning, which exert a (weak) separating force at every distance instead of vanishing beyond $d_{min}$; and the weights $\alpha,\beta$ as well as $d_{min}$ itself may be adapted during training. We use the hinge form throughout for its simplicity and interpretability; first empirical tests of the feature-space, adaptive-$d_{min}$ and repulsion variants are reported in Appendix \ref{app:convergence}.
Furthermore, we want to emphasize that the choice of the loss function $\mathcal E_G$ to encode the PDE's dynamics into the total loss function is an engineering choice and one could as well use concepts like the Deep-Ritz method to encode the PDE into the loss via an energy functional; see \cite{DeepRitzPaper}. We present a variation of the method described above in Section \ref{subsec:DDR}, where we use a Deep-Ritz loss.
}

\subsubsection*{3) Hard Constraints Approximation Theory}
Approximation theory for models with hard constraints is a topic which has already been discussed in literature; see e.g. 
\cite{exactImpositionOfBDCon}, where the authors use the framework of R-functions, to establish a strong theoretical framework.
Since we will also use hard constraints in the model for our experiments later, we want to invest a small section of this paper as well into justifying the use of such hard constraint models as described in Section \ref{Section Hard Constraints For Boundary Conditions}, via a novel elementary theorem. The theorem is formulated in the $\norm \cdot _\infty$-topology on the continuous functions, which makes it applicable in combination with Corollary \ref{corApproximationDeflationPINN}.
\begin{restatable}{thm}{theoremApproxmain}\label{thm For proving universality of imposed bc pinn}
    Assume $\Omega \subset \mathbb R^d$ is a bounded open domain. Let $g, h \in C( \Bar{\Omega} )$, s.t. $h \vert _{ \partial \Omega} = 0$ and $g > 0$ on $\Omega$, then there exists a sequence of functions $(f_n)_{n \in \mathbb N} \subset C(\Bar{\Omega})$, s.t.
    \[
    \norm{f_n g - h}_{\Omega} =\norm{f_n g - h}_{\infty, \Omega} \xrightarrow{n \to \infty} 0
    \]
\end{restatable}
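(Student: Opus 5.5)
The plan is to build $f_n$ explicitly by dividing $h$ by a regularized version of $g$. Away from the boundary, $h/g$ is a perfectly good continuous function. Near the boundary, both $h$ and $g$ may vanish, so we cut off $h$ there. Concretely, set
\[
f_n := \frac{\max\big(\min(h, g/\varepsilon_n)\,,\,-g/\varepsilon_n\big)}{\max(g, \varepsilon_n^2)}
\]
or, more simply, $f_n := h_n / \max(g,1/n)$, where $h_n$ is a continuous truncation of $h$ that vanishes where $g$ is small. Two facts make this possible. First, $h$ is uniformly continuous on the compact set $\bar\Omega$ and $h|_{\partial\Omega}=0$, so $|h|$ is small in a neighbourhood of $\partial\Omega$. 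Second, $g>0$ on $\Omega$ and $g$ is continuous on $\bar\Omega$, so $g\ge 0$ on $\bar\Omega$.

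The key steps, in order, are as follows.

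\textbf{Step 1.} Fix $\varepsilon>0$ and define the soft-thresholded function $h_\varepsilon := \operatorname{sign}(h)\max(|h|-\varepsilon,0)$. It is continuous on $\bar\Omega$, satisfies $\norm{h_\varepsilon-h}_\infty\le\varepsilon$, and vanishes on the set $\{|h|\le\varepsilon\}$. This set contains a relatively open neighbourhood $U$ of $\partial\Omega$ in $\bar\Omega$.

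\textbf{Step 2.} Look at the support of $h_\varepsilon$, which is contained in the closed set $C:=\{|h|\ge\varepsilon\}$. This set is compact, and it is disjoint from $\partial\Omega$, so $C\subset\Omega$. Since $g>0$ on $\Omega$ and $g$ is continuous, $g$ attains a positive minimum $\delta$ on $C$.

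\textbf{Step 3.} Define $f_\varepsilon := h_\varepsilon/\max(g,\delta)$. This function is continuous on $\bar\Omega$ because the denominator is at least $\delta>0$. Compare $f_\varepsilon g$ with $h_\varepsilon$ in two regions. On $C$ we have $\max(g,\delta)=g$, so $f_\varepsilon g=h_\varepsilon$ exactly. Off $C$, $h_\varepsilon=0$, so $f_\varepsilon=0$ and again $f_\varepsilon g=0=h_\varepsilon$. Hence $f_\varepsilon g=h_\varepsilon$ on all of $\bar\Omega$, which gives $\norm{f_\varepsilon g-h}_\infty\le\varepsilon$. Taking $\varepsilon=1/n$ produces the required sequence.

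The only delicate point is Step 2. The minimum $\delta$ is positive only because the compact set $\{|h|\ge\varepsilon\}$ stays away from $\partial\Omega$, which is exactly where $g$ may vanish. This is where both hypotheses, $h|_{\partial\Omega}=0$ and boundedness of $\Omega$ (which gives compactness of $\bar\Omega$), are used. One should also handle the trivial case $C=\emptyset$: then $h_\varepsilon\equiv0$ and we take $f_\varepsilon=0$.

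Finally, to link the result to Corollary \ref{corApproximationDeflationPINN}, each $f_n$ can in turn be uniformly approximated by networks. Since $g$ is bounded, this yields approximation of $h$ by $\omega\cdot NN_\theta$ with $\omega=g$.
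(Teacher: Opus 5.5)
Your proof is correct, and it takes a different route from the paper's.

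The paper localizes \emph{in space}. It introduces the sets $A_\lambda=\{x\in\Omega:\operatorname{dist}(x,\partial\Omega)>\lambda\}$ and proves an auxiliary lemma on nested compact sets to get $\norm{h}_{\infty,\bar\Omega\setminus A}<\varepsilon$ on a thin boundary strip. It then builds an Urysohn-type cutoff $\varphi$, equal to $1$ on $A$ and $0$ off a slightly larger set $B$, and sets $f_\varepsilon=(h/g)\varphi$. This gives $f_\varepsilon g=h$ exactly on $A$ and an error of at most $2\varepsilon$ on the strip.

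You instead localize \emph{in the range of $h$}. Soft-thresholding replaces the geometric boundary strip by the level set $\{|h|<\varepsilon\}$. Compactness of $C=\{|h|\ge\varepsilon\}\subset\Omega$, together with the extreme value theorem for $g$ on $C$, does the work of both the nested-compact-set lemma and the Urysohn function.

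What your version buys is a shorter argument with an explicit closed form $f_\varepsilon=h_\varepsilon/\max(g,\delta)$. It also gives the exact identity $f_\varepsilon g=h_\varepsilon$ on all of $\bar\Omega$ and the sharper bound $\varepsilon$ instead of $2\varepsilon$. What the paper's version buys is exactness $f_\varepsilon g=h$ on a prescribed interior region, with all the error confined to a boundary layer. This mirrors the intuition behind the hard-constraint ansatz.

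Two small clean-ups. The tentative formulas in your opening paragraph are not needed, since Steps 1--3 are self-contained. And your description of $h_n$ as vanishing ``where $g$ is small'' should read ``where $|h|$ is small'', which is what you actually use.
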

We now apply the theorem above onto our Deflation-PINN architecture:
\begin{cor}
    Suppose that $\sigma: \mathbb R\to \mathbb R$ is a continuous and non-polynomial activation function, $\Omega \subset \mathbb R^d$ is a bounded open domain, $u_1, \dots, u_N \in  C(\bar \Omega)$, and $ g,h \in C(\bar \Omega)$ are such that $h\vert _ {\partial \Omega} = u_n \vert _ {\partial \Omega}$ for all $n \in \{ 1, \dots , N\}$, $g \vert _ {\partial \Omega}= 0 $ and $g > 0$ on $\Omega$.
    
    Then for all $\varepsilon >0$ there exist $ p\in \mathbb N$, $\zeta _ i \in \mathbb R$, $w_i \in  \mathbb R^d $ and $\beta_n^i \in \mathbb R$ such that for all $n  \in \{1, \dots ,N\}$ and $y \in \bar \Omega$ we have that
    \[
    \Big|  u_n(y) -\Big( h(y) + g(y)\cdot \sum \limits _{i=1}^p \beta_n ^i \sigma(w_i \cdot y + \zeta_i) \Big) \Big| < \varepsilon.
    \]
\end{cor}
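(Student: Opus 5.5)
The plan is to combine Theorem \ref{thm For proving universality of imposed bc pinn} with Corollary \ref{corApproximationDeflationPINN}, using the sup norm throughout. Fix $\varepsilon>0$.

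\textbf{Step 1: reduce to a homogeneous boundary problem.} For each $n$, set $r_n := u_n - h \in C(\bar\Omega)$. By hypothesis $r_n\vert_{\partial\Omega} = 0$, and $g>0$ on $\Omega$. So Theorem \ref{thm For proving universality of imposed bc pinn}, applied with the pair $(g, r_n)$, gives $f_n \in C(\bar\Omega)$ with
\[
\norm{f_n g - r_n}_{\infty,\Omega} < \varepsilon/2 .
\]
This holds on $\bar\Omega$ as well. Both sides are continuous on $\bar\Omega$, and $\Omega$ is dense in $\bar\Omega$ because $\bar\Omega$ is the closure of $\Omega$. Apply this separately for each $n=1,\dots,N$; since there are finitely many $n$, no uniformity issue arises.

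\textbf{Step 2: approximate the multipliers by the shared network.} The set $\bar\Omega$ is compact because $\Omega$ is bounded. Also $g$ is continuous on it, so $M := \norm{g}_{\infty,\bar\Omega} < \infty$. If $M=0$ then $g\equiv 0$ on $\bar\Omega$, which is impossible since $g>0$ on the nonempty set $\Omega$; in any case we may replace $M$ by $\max(M,1)$. Apply Corollary \ref{corApproximationDeflationPINN} with $\mathbf K = \bar\Omega$, the functions $f_1,\dots,f_N$, and tolerance $\varepsilon/(2M)$. This yields one common $p$, $w_i$, $\zeta_i$ and coefficients $\beta_n^i$ with
\[
\Big|f_n(y) - \sum_{i=1}^p \beta_n^i\sigma(w_i\cdot y+\zeta_i)\Big| < \varepsilon/(2M)
\]
for all $n$ and all $y\in\bar\Omega$.

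\textbf{Step 3: combine by the triangle inequality.} For $y\in\bar\Omega$,
\[
\Big|u_n(y) - h(y) - g(y)\textstyle\sum_i \beta_n^i\sigma(w_i\cdot y+\zeta_i)\Big| \le |r_n(y) - g(y)f_n(y)| + |g(y)|\,\Big|f_n(y) - \textstyle\sum_i\beta_n^i\sigma(w_i\cdot y+\zeta_i)\Big| .
\]
The right-hand side is less than $\varepsilon/2 + M\cdot\varepsilon/(2M) = \varepsilon$.

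\textbf{Main obstacle.} The real content lies in Theorem \ref{thm For proving universality of imposed bc pinn}, which we take as given. The step needing care is the passage from $\Omega$ to $\bar\Omega$ and the requirement that the trunk parameters be shared across all $n$. Corollary \ref{corApproximationDeflationPINN} supplies the sharing directly, so the remaining concern is only that the approximation be uniform on the compact set $\bar\Omega$. That uniformity requires $f_n$ to be continuous up to the boundary, which Theorem \ref{thm For proving universality of imposed bc pinn} guarantees.
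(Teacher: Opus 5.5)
Your proposal is correct and follows the paper's proof essentially verbatim. You apply Theorem \ref{thm For proving universality of imposed bc pinn} to $u_n - h$ and $g$, approximate the resulting $f_n$ with a shared trunk via Corollary \ref{corApproximationDeflationPINN} on $\mathbf K = \bar\Omega$, and conclude by the triangle inequality; the only cosmetic difference is that the paper uses the single tolerance $\varepsilon' = \varepsilon/(1+\norm{g}_{\infty,\Omega})$, which avoids your case discussion of $M$.
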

\begin{proof}
    Fix $\varepsilon >0$ and set $\varepsilon' := \varepsilon / (1 + \norm{g}_{\infty, \Omega})$. Since $(u_n - h)\vert_{\partial \Omega} = 0$, Theorem \ref{thm For proving universality of imposed bc pinn} applied to $u_n - h$ and $g$ yields for each $n \in \{1, \dots ,N\}$ an $f_{\varepsilon, n} \in C(\bar \Omega)$ such that
    
    \[
    \norm{ \left( u_n - h\right) - g\cdot f_{\varepsilon, n}  }_{\Omega}  < \varepsilon'.
    \]
    By Corollary \ref{corApproximationDeflationPINN} (with $\mathbf K = \bar \Omega$, which is compact since $\Omega$ is bounded) there exist $p$, $\zeta_i$, $w_i$ and $\beta_n^i$ such that
    \[
        \Big|  f_{\varepsilon, n}(y)- \sum \limits _{i=1}^p \beta_n ^i \sigma(w_i \cdot y + \zeta_i) \Big| < \varepsilon' 
    \]
    for all $n$ and all $y \in \bar \Omega$.
    The triangle inequality then gives, for all $n$ and $y \in \bar\Omega$, $\big| u_n(y) - \big(h(y) + g(y) \sum_{i=1}^p \beta_n^i \sigma(w_i \cdot y + \zeta_i)\big)\big| \le \norm{(u_n - h) - g f_{\varepsilon,n}}_{\Omega} + \vert g(y) \vert \, \big| f_{\varepsilon,n}(y) - \sum_{i=1}^p \beta_n^i \sigma(w_i \cdot y + \zeta_i) \big| < \varepsilon' + \norm{g}_{\infty,\Omega}\, \varepsilon' = \varepsilon$.
\end{proof}

\section{Numerical Experiments}
\label{sec:Experiments}
In this section, we discuss numerical results on the benchmark example of the reduced two-dimensional LdG model of liquid crystal theory, and present the solutions obtained through our Deflation-PINN, together with their certification by a classical gradient-flow/Newton census, their refinement to reference accuracy---purely neural in Section \ref{subsec:DDR} and classical in Section \ref{subsec:refinement}---and a second benchmark with a provably complete solution set on the unit disk (Section \ref{subsec:AllenCahn}). The code can be found in \url{https://github.com/SeanDisaro/DeflationPINNs}.

\subsection{Landau-De Gennes Model}
Landau-de Gennes theory is a phenomenological variational theory that describes the state of nematic liquid crystals in terms of an order parameter, $\Qvec$ tensor. In two dimensions, $\Qvec$ tensor  is a symmetric traceless matrix in $\mathbb{R}^{2\times 2}, $ and is represented as a vector $\Qvec:=(Q_{11},Q_{12}).$ 
In the absence of surface effects  and external field, the reduced two dimensional LdG  \cite{MultistabilityApalachong} functional in the dimensionless form is given by
	\begin{align} \label{eq:2.1.1}
	\mathcal{E}(\Qvec) =\int_\Omega (\abs{\nabla \Qvec}^2 + 
	\epsilon^{-2}(\abs{\Qvec}^2  - 1)^2) \dx, 
	\end{align}
where $\Qvec\in \mathbf{H}^1(\Omega)$ and $\epsilon$ is a small parameter that depends on  the elastic constant, the bulk energy parameters, and the domain size.
We are concerned with the minimization of the functional $\mathcal{E}$ for the Lipschitz continuous boundary function $  \Qvec_b : \partial\Omega \rightarrow \mathbb{R}^2$ consistent with the experimentally imposed tangent boundary conditions. 
The energy formulation in (\ref{eq:2.1.1}) seeks $\Qvec \in \mathbf{H}^1(\Omega) $ such that 
	\begin{align}  \label{eq:LdGEL} 
	-\Delta \Qvec =2\epsilon^{-2}(1-\abs{\Qvec}^2)\Qvec \text{ in } \Omega \,\, \text{ and }\,\, \Qvec =  \Qvec_b \text{ on } \partial \Omega. 
	\end{align}
For our experiment, we use $\epsilon=0.02.$  
We solve the system (\ref{eq:LdGEL}) in $\Omega:=[0,1]\times [0,1]$ with 
 the Dirichlet boundary condition $ \Qvec_b$ constructed utilizing trapezoidal shape function $\textit{T}_d:[0,1]\rightarrow {\mathbb{R}}$ with  $d=3 \epsilon$ as
\begin{align} 
\label{eq:boundaryCondition}
	 \Qvec_b=
	&\begin{cases} 
	(\textit{T}_{d}(x),0) & \text{on}\,\,\,\, y=0 \,\,\,\,\text{and} \,\,\,\, y=1, \\
	(- \textit{T}_d(y),0)  & \text{on}\,\,\,\, x=0 \,\,\,\,  \text{and} \,\,\,\,  x=1, 
	\end{cases}
    \text{ and }\\	 
	\textit{T}_d(t)=
	&\begin{cases} 
	t/d, & 0 \leq t \leq d,  \\
	1, &  d \leq  t \leq 1- d, \\
	(1-t)/d, & 1- d \leq t \leq 1.
	\end{cases}
	\end{align}  
    Experimental and numerical investigations   suggest  that  there are  six stable solutions to this problem on a square domain, which have been computed using FEMs; see \cite{Tsakonas, MultistabilityApalachong,DGFEM}. Two
classes of stable experimentally observable configurations are reported: diagonal states (D1 and D2) in which the nematic directors are aligned along the square diagonals, and rotated states (R1, R2, R3, R4) in which  the  nematic directors rotate in $\pi$ radian angles across the square edges. 
 Some of these nematic equilibria are presented in  Figure \ref{fig:PlotSolAll}. 

\revtwo{\subsection{Solving LdG with Deflation-PINNs}}
\label{subsec:Solving LdG with DeflationPINNs}
This section is meant to give an overview of the architectural choices we used in our experiments. The Reproducibility Statement in the appendix includes more specific details about hyperparameter choices and hardware.
We use the model discussed in Section \ref{Subsection Deflation-PINNs} with a hard constraint for the Dirichlet boundary condition. The model is adapted  to incorporate  two outputs, i.e. the final model $\mathbf G = (G^1, G^2)$ is of the form
\begin{align*}
    G^1 :=&\revtwo{\hat G^1}(k, x) \cdot \omega (x) + \tilde Q_b(x)=\omega (x) \cdot \sum \limits _{i = 1} ^p \tau _ i (x) \cdot \beta _i ^k + \tilde Q_b(x) \approx u_k ^1 (x) \\
    G^2:=&\revtwo{\hat G^2}(k, x)\cdot \omega (x) ~~~~~~~~~~~=\omega (x) \cdot \sum \limits _{i = p+1} ^{2p} \tau _ i (x) \cdot \beta _i ^k ~~~~~~~~~\approx u_k ^2 (x),
\end{align*}
where $\beta ^k \in \mathbb R^{2p}$ for $k = 1, \dots, 6$ and $\tau: \mathbb R ^2 \to \mathbb R ^{2p}$ is a multi-layer perceptron (MLP) whose first $p$ outputs feed $G^1$ and whose last $p$ outputs feed $G^2$. \revtwo{$\hat G = (\hat G^1,\hat G^2)$ denotes the two-output Deflation-PINN core of Section \ref{Subsection Deflation-PINNs} before the hard constraint is applied; we write $\hat G$ instead of $\tilde G$ to keep this concrete two-output construction notationally distinct from the generic Deflation-PINN model $\tilde G$ of Section \ref{Subsection Deflation-PINNs}.}
$\omega: [ 0,1 ]^2 \to \mathbb R$ is such that $\omega > 0$ on $(0,1)^2$ and $\omega = 0$ on $\partial [ 0,1 ]^2$. $\tilde Q_b(x)$ is an extension of the trapezoidal function used in $\Qvec_b$, cf.\ \eqref{eq:boundaryCondition}, and is given by
\[
\tilde Q_b(x) \vert _{\partial [ 0,1 ]^2} = \Qvec_b ^1=
	\begin{cases} 
	\textit{T}_{d}(x) & \text{on}\,\,\,\, y=0 \,\,\,\,\text{and} \,\,\,\, y=1, \\
	- \textit{T}_d(y)  & \text{on}\,\,\,\, x=0 \,\,\,\,  \text{and} \,\,\,\,  x=1.
	\end{cases}
\]
Note that we did not add anything to the second output term of the model and multiplied it only by $\omega$. This is because we require the Dirichlet condition to be zero for the second term.
Concretely, we use the $C^\infty-$construction from \cite{lu2021deepxde} for $\omega$ and for $\tilde Q_b(x)$ we use the radial extension from Definition \ref{def radial Extrapolation} \revtwo{(Appendix \ref{app:radial})}. For this, we use the point $({1 \over 2},{1 \over 2})\in \Omega := [0,1]^2$ as a center for our star domain. 
The exact formula for the RBD function can be explicitly calculated and for this we refer to our implementation in our repository \url{https://github.com/SeanDisaro/DeflationPINNs}.
For the function $h: [0,1] \to [0,1]$ from Definition \ref{def radial Extrapolation} we simply use the function $h(x) = 1-x$, since this is very efficient to compute.\\
Regarding the PIML loss function, we only need to implement the dynamics of the model since we already satisfy the Dirichlet condition by construction. Thus, we can use equation (\ref{eq:LdGEL}) to formulate a generalization error of the form
\begin{align*}
   \mathcal E_G(\mathbf G) :=& \int _ {\Omega}\left | \epsilon ^2 \cdot \Delta  G^1 + 2\cdot (1- G^1 \cdot G^1  - G^2\cdot G^2 )\cdot  G^1 \right |^2 \dx  \\
   + &\int _ {\Omega}\left |\epsilon ^2 \cdot \Delta G^2  +  2\cdot (1- G^1 \cdot G^1  - G^2\cdot G^2 )\cdot G^2\right |^2\dx.
\end{align*}
\revtwo{Regarding the deflation loss, the definition in Section \ref{Subsection Deflation-PINNs} suggests computing the $L^2([0,1]^2, \mathbb R^2)$ norm of the full vector field $\mathbf G(i, \cdot) : \mathbb R ^2 \to \mathbb R ^2$. In the run reported below this recovers all six equilibrium states, and the learned branches remain well separated: the smallest pairwise distance between branches is $\approx 0.51$ in the full $L^2([0,1]^2, \mathbb R^2)$ norm, above the imposed $d_{min} = 0.4$ (for comparison, the true states are pairwise at least $1.086$ apart, cf.\ Section \ref{subsec:refinement}), so no two branches collapse onto one another.}\\

We compare our results with the results from \cite{FerroRMAMNN2021}, which were calculated using the finite element method. The plots can be seen in Figure \ref{fig:PlotSolAll}.
\revtwo{Furthermore, we compute the relative $L^2$ errors of the six learned branches against the mesh-converged reference solutions constructed in Section \ref{subsec:refinement}; they are reported in Table \ref{tab:relL2Differences}, together with the errors after the branches are refined by the classical procedure of Section \ref{subsec:refinement}.

For context, the errors of the discovery stage should not be compared one-to-one with accuracies reported by the related approaches of Section \ref{subsec:RelatedWork}: the HomPINNs of \cite{HomPINNs} operate in a semi-supervised regime---they fit observations sampled from the solutions themselves and report the accuracy of recovered equation parameters (of order $10^{-3}$--$10^{-5}$) rather than unsupervised field errors---while the random-initialization strategy of \cite{multipleSolPINN} trains many independent single-solution PINNs and selects distinct solutions afterwards. The discovery stage here is a \emph{single} fully unsupervised training run that must resolve six branches simultaneously. Section \ref{subsec:refinement} shows that this output determines all six states completely: refined by a classical solver that it initializes, every branch reaches reference accuracy.}

\begin{figure}[H]
    \centering
     \subfloat[$\Qvec^{NN}_{\text{D1}}$]{\includegraphics[width=0.25\linewidth]{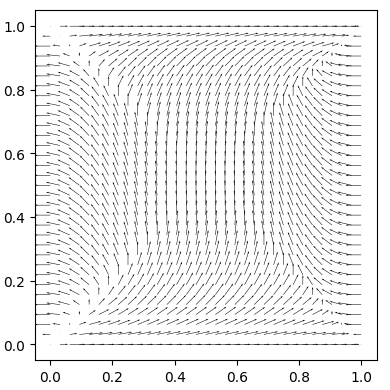}}
      \hspace{0.2 cm} \subfloat[$\Qvec^{NN}_{\text{R1}}$]{\includegraphics[width=0.25\linewidth]{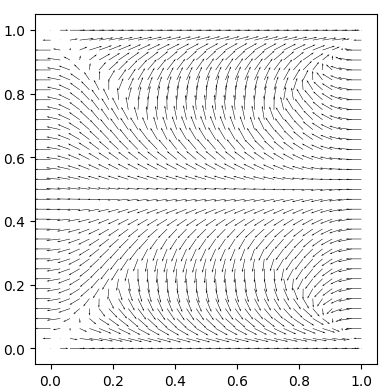}}
      \hspace{0.2 cm}   \subfloat[$\Qvec^{NN}_{\text{R3}}$]{\includegraphics[width=0.24\linewidth]{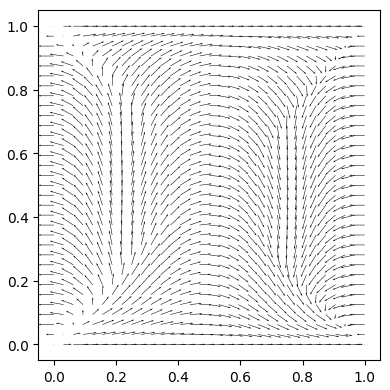}}
     \\    \subfloat[$\Qvec^{FEM}_{\text{D1}}$]{\includegraphics[width=0.25\linewidth]{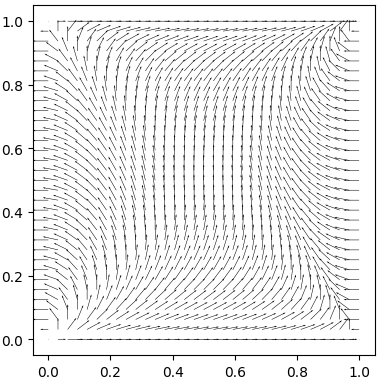}}  \hspace{0.2 cm} \subfloat[$\Qvec^{FEM}_{\text{R1}}$]{\includegraphics[width=0.25\linewidth]{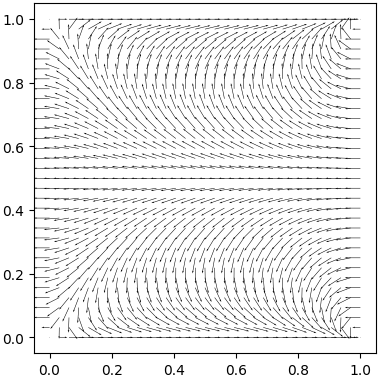}}
  \hspace{0.2 cm}    \subfloat[$\Qvec^{FEM}_{\text{R3}}$]{\includegraphics[width=0.25\linewidth]{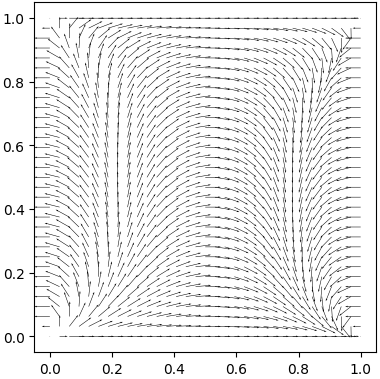}}
\end{figure}

\begin{figure}
    \centering
     \subfloat[$\Qvec^{NN}_{\text{D2}}$]{\includegraphics[width=0.25\linewidth]{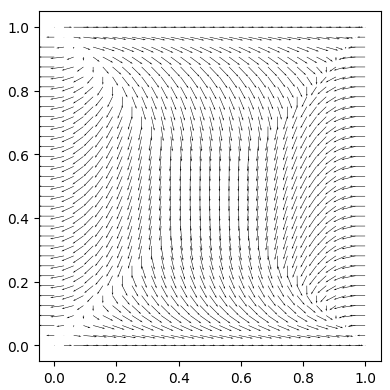}}
      \hspace{0.2 cm} \subfloat[$\Qvec^{NN}_{\text{R2}}$]{\includegraphics[width=0.25\linewidth]{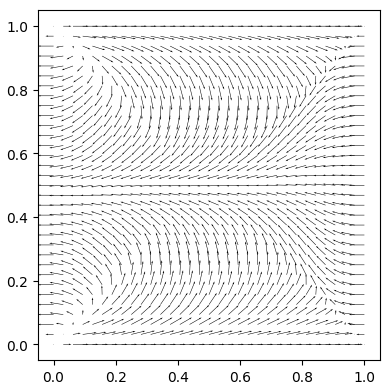}}
      \hspace{0.2 cm}   \subfloat[$\Qvec^{NN}_{\text{R4}}$]{\includegraphics[width=0.24\linewidth]{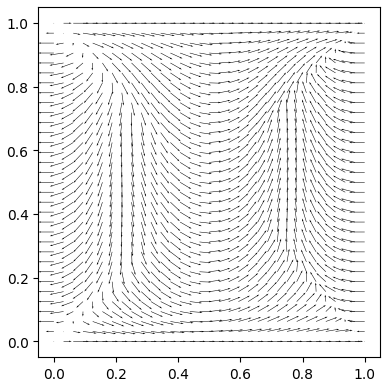}}
     \\    \subfloat[$\Qvec^{FEM}_{\text{D2}}$]{\includegraphics[width=0.25\linewidth]{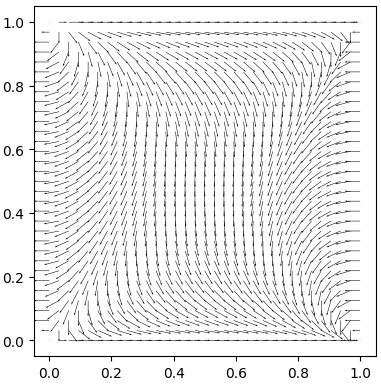}}  \hspace{0.2 cm} \subfloat[$\Qvec^{FEM}_{\text{R2}}$]{\includegraphics[width=0.25\linewidth]{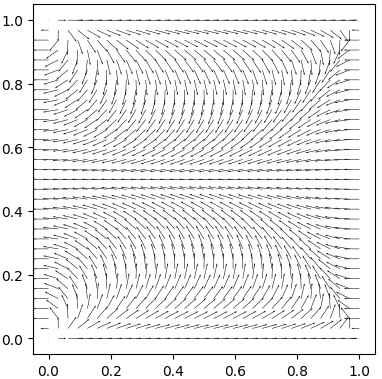}}
  \hspace{0.2 cm}    \subfloat[$\Qvec^{FEM}_{\text{R4}}$]{\includegraphics[width=0.25\linewidth]{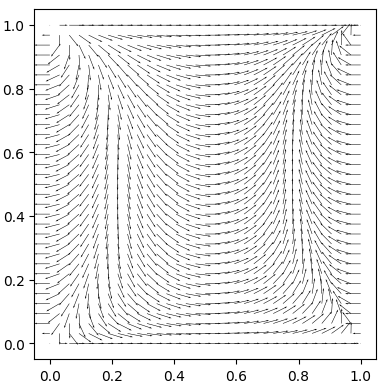}}
    \caption{\revtwo{
    Nematic director fields for the two diagonal states (D1, D2) and four
    rotated states (R1--R4) of the LdG problem \eqref{eq:LdGEL}.
    Rows one and three show the solutions $\Qvec^{NN}$ obtained by a single
    Deflation-PINN trained once with $K=6$ branches, while rows two and four
    show the corresponding FEM reference solutions $\Qvec^{FEM}$
    from \cite{FerroRMAMNN2021}.
    }}
    \label{fig:PlotSolAll}
\end{figure}

\begin{table}[H]
\begin{center}
\revtwo{\resizebox{\textwidth}{!}{\begin{tabular}{ | m{2.6em} | m{4.8em} | m{3.4em} | m{3.4em} | m{3.4em} | m{4.6em} | m{4.6em} | m{4.6em} |}
    \hline
     & Discovery & \multicolumn{3}{c|}{Deflation--Deep-Ritz} & \multicolumn{3}{c|}{Classical Refinement} \\
    \hline
     & rel.\ $L^2$ & rel.\ $L^2$ & rel.\ $L^\infty$ & rel.\ $H^1$ & rel.\ $L^2$ & rel.\ $L^\infty$ & rel.\ $H^1$ \\
    \hline
    D1 & $0.58$ & $0.013$ & $0.21$ & $0.16$ & $2.5\cdot 10^{-12}$ & $4.4\cdot 10^{-11}$ & $2.0\cdot 10^{-11}$\\
    \hline
    D2 & $0.59$ & $0.013$ & $0.21$ & $0.15$ & $2.5\cdot 10^{-12}$ & $4.4\cdot 10^{-11}$ & $2.0\cdot 10^{-11}$\\
    \hline
    R1 & $0.32$ & $0.013$ & $0.19$ & $0.14$ & $2.5\cdot 10^{-12}$ & $4.4\cdot 10^{-11}$ & $1.9\cdot 10^{-11}$\\
    \hline
    R2 & $0.30$ & $0.013$ & $0.19$ & $0.14$ & $2.5\cdot 10^{-12}$ & $4.4\cdot 10^{-11}$ & $1.9\cdot 10^{-11}$\\
    \hline
    R3 & $0.30$ & $0.012$ & $0.19$ & $0.14$ & $2.5\cdot 10^{-12}$ & $4.4\cdot 10^{-11}$ & $1.9\cdot 10^{-11}$\\
    \hline
    R4 & $0.30$ & $0.013$ & $0.20$ & $0.15$ & $2.5\cdot 10^{-12}$ & $4.4\cdot 10^{-11}$ & $1.9\cdot 10^{-11}$\\
    \hline
\end{tabular}}}
\end{center}
\caption{\revtwo{Errors of the six solution branches against the mesh-converged reference of Section \ref{subsec:refinement}, at the three stages of the pipeline: the discovery stage (a single unsupervised Deflation-PINN run, full vector-field deflation; relative $L^2$), the purely neural Deflation--Deep-Ritz refinement of Section \ref{subsec:DDR}, and the classical energy-descent/Newton refinement of Section \ref{subsec:refinement} initialized by the branches (each with relative $L^2$, relative $L^\infty$ normalized by $\max\vert\mathbf Q^{ref}\vert$, and relative $H^1$ via first-order finite differences of the error). The Deflation--Deep-Ritz block is evaluated on the $513^2$ reference grid; its $L^\infty$ and $H^1$ errors are dominated by the four defect cores of width $\sim\epsilon$, cf.\ Figure \ref{fig:DDRerr}. The classical-refinement block is evaluated against the discrete reference on the common $h=1/256$ grid: all three norms are at the rounding level of the Newton solve, i.e., the refined branches coincide with the reference solutions. The association of branches with states is not by proximity alone but by basin membership: flowing each branch under the LdG gradient flow yields a \emph{different} one of the six stable states (Section \ref{subsec:refinement}), so each branch is an approximation of exactly one state in the dynamically meaningful sense.}}

\label{tab:relL2Differences}
\end{table}

\revtwo{\subsection{A Mesh-Converged Reference and Refinement to Machine Precision}
\label{subsec:refinement}
\paragraph{Reference solutions.} To make the accuracy assessment independent of the resolution of any particular data export, we constructed mesh-converged reference solutions for all six states: a standard second-order finite-difference discretization of system \eqref{eq:LdGEL}, solved by damped Newton iteration on grids with $h = 1/256$ and $h=1/512$ and seeded by bilinear interpolation of the FEM data of \cite{FerroRMAMNN2021}. Every state converges in three Newton steps to a terminal residual of $5\cdot 10^{-12}$. The discrete energies are $79.455$ and $78.707$ (diagonal states) and $88.090$ and $87.342$ (rotated states) at $h = 1/256$ and $h = 1/512$, respectively; since the discrete energy functional (forward differences and a rectangle rule) is only first-order accurate, we use first-order Richardson extrapolation, $2E_{h/2} - E_h$, which yields $E_{D} \approx 77.96$ for the diagonal and $E_{R}\approx 86.59$ for the rotated states, matching the FEM energies computed in \cite{FerroRMAMNN2021} ($78.011$ and $86.648$) to $0.06\%$, which validates the reference.

\paragraph{Refinement of the discovered branches.} The Deflation-PINN is a \emph{discovery} device: a single unsupervised run must locate six distinct solution branches simultaneously. To determine what the discovered branches are worth quantitatively, we refine each branch by a classical two-step procedure that uses no information beyond the branch itself: (i) a semi-implicit discretization of the $L^2$-gradient flow of the LdG energy, $\partial_t \Qvec = \epsilon^2 \Delta \Qvec + 2\,(1-\vert \Qvec\vert^2)\Qvec$ (time step $0.1$, $3{,}000$ steps at $h=1/256$), which can only decrease the energy and therefore cannot terminate at unstable critical points, followed by (ii) damped Newton iteration, which supplies terminal quadratic convergence. The outcome is unambiguous: \emph{each of the six branches converges to a different one of the six stable states}---the two branches matched to the diagonal states flow to D1 and D2 (discrete energy $79.455$ at $h=1/256$) and the four remaining branches to R1--R4 (discrete energy $88.090$)---with terminal Newton residuals below $7\cdot 10^{-14}$ and relative $L^2$ distance $2.5\cdot 10^{-12}$ to the reference states (Table \ref{tab:relL2Differences}, right block). The deflation mechanism thus places every branch inside the basin of attraction of a distinct equilibrium. This realizes, for the multi-solution setting, exactly the pipeline advocated in \cite{multipleSolPINN}: the network provides coarse but correctly-basined approximations, and a classical solver initialized by them delivers high-precision solutions---here, all six of them, from one unsupervised training run.

\paragraph{The admissible range of $d_{min}$.} The reference solutions also quantify the choice of $d_{min}$ for this problem: the six stable states are pairwise at least $1.086$ apart in the discrete $L^2$ metric used by the deflation loss, and the certified unstable wall state below lies at distance $0.908$ from every stable state. Together with the Allen--Cahn observation of Section \ref{subsec:AllenCahn} (a $d_{min}$ \emph{above} the true minimal separation provably prevents convergence), this constrains the parameter from both sides: $d_{min}$ must lower-bound the minimal true separation, but in our experiments values well below that separation were already harmful---the hinge term then dominates early training, before the branches are near any solution, and prevents convergence altogether---so that on the LdG problem the operative upper edge is set by trainability ($\approx 0.5$) rather than by geometry ($1.086$). Both kinds of edge can be located automatically by a bisection on $d_{min}$ with the final residual as classifier (Appendix \ref{app:convergence}): on the Allen--Cahn benchmark the bisection locates the geometric edge without reference data (its final interval $[0.275, 0.288]$ lies within $6\%$ of the true minimal separation $0.272$), and on the LdG problem it places the practical trainability edge near $0.5$. The moderate value $d_{min} = 0.4 \ll 1.086$ is preferable in practice, at the price that pairwise separation alone cannot guarantee that two branches do not approximate the same solution: deflation constrains \emph{distances}, not \emph{basin membership}. Over-generating branches does not remove the duplication either: in a run with $K=11$ branches at $d_{min}=0.4$, nine branches certify under the gradient-flow/Newton census yet occupy only four distinct stable states (Appendix \ref{app:convergence}). The adaptive variants of Section \ref{Subsection Deflation-PINNs} (growing $d_{min}$ during training, scale-free repulsion terms) were tested for this reason; as reported in Appendix \ref{app:convergence}, they do not remove the duplication either, which is why completeness is delegated to the restart protocol described next.

\paragraph{Restarts with certification.} The census that certifies basin membership (gradient flow followed by Newton) costs minutes per branch on a CPU and uses no reference data, which turns the duplication problem into one that can be managed algorithmically: repeat the discovery run with fresh random initializations, certify the basins reached by every branch, retain each newly found basin, and stop once consecutive restarts contribute nothing new. In a repetition study at the published configuration (eight independent runs), individual runs certified between two and four distinct basins (mean $3.1$), with pronounced basin anisotropy---D2 was found by all eight runs, R1 and R2 by one run each---while the certified \emph{union} covered all six states after five runs and remained complete over the remaining three (Appendix \ref{app:convergence}, Table \ref{tab:restarts}). Completeness of a single run is therefore not guaranteed (the run reported in Table \ref{tab:relL2Differences} does certify all six basins), but the certified union over a handful of restarts reached completeness with an unsupervised stopping rule; we regard this restart-with-certification protocol, rather than any single run, as the operative completeness statement of the method.

\paragraph{A caveat on residual-based post-training.} We caution against refining the branches by continued minimization of the squared PDE residual alone. The pointwise reaction residual $\vert 2(1-\vert\Qvec\vert^2)\Qvec\vert^2 = 4\rho^2(1-\rho^2)^2$, $\rho = \vert \Qvec\vert$, vanishes both on the physical well $\rho = 1$ and at the isotropic state $\rho = 0$, with watershed at $\rho = 1/\sqrt 3$: wherever the scalar order parameter of a branch dips below this threshold, squared-residual descent drives it toward the unphysical isotropic well rather than the physical one. In our experiments, residual-only post-training accordingly degraded the amplitude-deficient branches and drifted them toward wall-type configurations: fields with interior lines on which $\vert\Qvec\vert \approx 0$ and strongly elevated energies. Such wall states are genuine \emph{unstable} solutions of \eqref{eq:LdGEL}: since the boundary data has $Q_{12} = 0$, the set $\{Q_{12}\equiv 0\}$ is an invariant subspace, and by Newton iteration with $\epsilon$-continuation inside this subspace we certified an X-shaped wall solution at $\epsilon = 0.02$ with energy $366.07$ and terminal residual below $10^{-14}$; the residual-refined branch that resisted all further refinement is precisely the one closest to this state. Unstable order-reconstruction states of this type---with the WORS cross state of \cite{KraljMajumdar2014} in the small-$\epsilon$ regime as the paradigm---are well documented for this model \cite{KraljMajumdar2014,CanevariMajumdarWang2017}; indeed, the solution landscape of \eqref{eq:LdGEL} on the square contains, in addition to the six stable states, a whole hierarchy of such saddle points \cite{HanSolutionLandscape2021}. Energy-descent refinement is immune to both failure modes, which is why it is used above. We note the flip side of this observation: precisely because residual least squares is stability-blind, deflated residual minimization with \emph{more} branches than stable states is a natural candidate for computing the unstable part of the solution landscape; a systematic study is left to future work.}

\revtwo{\subsection{A Deflation--Deep-Ritz Method}
\label{subsec:DDR}
Since problem \eqref{eq:LdGEL} is the Euler--Lagrange equation of the energy \eqref{eq:2.1.1}, the deflation construction of Section \ref{Subsection Deflation-PINNs} combines equally naturally with a Deep-Ritz-type loss \cite{DeepRitzPaper}: replacing the PIML residual term by the (discretized) energy itself,
\[
    \mathcal L_{DDR}(\mathbf G) := \alpha \cdot \sum_{k=1}^{K} \mathcal E\big(\mathbf G(k,\cdot)\big) \;+\; \beta \cdot \mathcal L_{Def}(\mathbf G),
\]
yields what we call a \emph{Deflation--Deep-Ritz} method. Its training can only decrease the branch energies while the deflation term keeps the branches pairwise separated, and by the stability discussion of Section \ref{subsec:refinement} it cannot terminate at unstable critical points: zero-deflation local minimizers of $\mathcal L_{DDR}$ are, by construction, \emph{stable} states that are pairwise at least $d_{min}$ apart. The two loss forms are thus complementary: the residual form is stability-blind and sees every solution of \eqref{eq:LdGEL}, whereas the Deep-Ritz form targets precisely the stable part of the solution set. One might hope to dispense with the discovery stage entirely and minimize $\mathcal L_{DDR}$ from random initialization; this fails in a structured way---all branches collapse into the two lowest-energy basins, with the deflation constraint saturated exactly at $d_{min}$---and is documented in Appendix \ref{app:convergence}. The residual-based discovery stage is what provides basin coverage.

We employ this method as a purely neural refinement of the discovery stage, keeping the pipeline unsupervised end to end: the six discovered branches are first projected onto six independent smooth networks (random Fourier features matched to the core scale $\epsilon$, with the harmonic extension of the boundary data as the hard constraint) by fitting the discovery output itself---a self-distillation step that uses no reference data---and $\mathcal L_{DDR}$ is then minimized with an unbiased quadrature of the energy: Adam on a $192^2$ composite-midpoint grid, double-precision L-BFGS on a $384^2$ midpoint grid, and a final double-precision L-BFGS continuation in which the midpoint rule ($O(h^2)$) is replaced by Simpson quadrature ($O(h^4)$) on a $321^2$ grid, with $d_{min} = 0.8$ throughout. (The value $0.8$ exceeds the trainability edge $\approx 0.5$ of the discovery stage; it is admissible here because the distilled branches start pairwise $\gtrsim 1$ apart, so the hinge is inactive from the first step---the edge is an early-training phenomenon of randomly initialized branches. For the same reason the branch/trunk coupling of the discovery architecture is not needed at this stage, and each branch is represented by its own network.) Full details are given in the Reproducibility Statement. The two-stage optimization is essential and consistent with standard PINN practice: a first-order method alone does not reach high accuracy---near a minimizer the landscape is flat and Adam stalls---so Adam is used only to approach the minimizers, and the final descent is performed by the quasi-Newton L-BFGS method (strong Wolfe line search) in double precision; in our runs L-BFGS continued to reduce both the loss and the error where Adam had plateaued.

The outcome is reported in the Deflation--Deep-Ritz block of Table \ref{tab:relL2Differences}: all six branches reach relative $L^2$ errors between $1.2\%$ and $1.3\%$ against the mesh-converged reference---uniformly across the diagonal and rotated states---with branch energies $80.0$--$80.2$ for the diagonal states and $88.3$--$88.5$ for the rotated ones (reference values $78.0$ and $86.6$). Figure \ref{fig:DDRSolutions} shows the six fields: the scalar order parameter satisfies $\vert\Qvec\vert \approx 1$ throughout the bulk, the only defects are the four corner cores, no interior order-reconstruction walls are present, and the director topology of every state is exact. The remaining percent-level gap is a quadrature effect: the minimizer of the \emph{discretized} Ritz functional lies within the quadrature error of the true minimizer, and the corner cores of width $\epsilon$ dominate that error. This diagnosis is confirmed experimentally by the quadrature continuation itself: with the training budget otherwise unchanged, raising the quadrature order from midpoint ($O(h^2)$, errors $1.9$--$2.4\%$ after the first two stages) to Simpson ($O(h^4)$) reduced the error to $1.2$--$1.3\%$---the error moves with the quadrature, not with the optimizer or the architecture.

The $L^\infty$ and $H^1$ columns of Table \ref{tab:relL2Differences} complement the relative $L^2$ errors, and Figure \ref{fig:DDRerr} shows contour maps of the pointwise error $\vert \mathbf Q^{NN}-\mathbf Q^{ref}\vert$. The localization of the error is instructive. After the Deflation--Deep-Ritz refinement, $59$--$77\%$ of the squared error is concentrated in the four corner regions of side $0.1$ containing the defect cores of width $\sim\epsilon$ (before the Simpson continuation this fraction was $92$--$94\%$: the higher-order quadrature reduces precisely the core error), the boundary layer contributes $16$--$26\%$, and the bulk $7$--$15\%$; the $L^\infty$ error is attained \emph{at} the cores. By contrast, the error of the discovery stage is a \emph{bulk} phenomenon (Figure \ref{fig:discerr}: $65$--$81\%$ of its squared error lies at distance $\ge 0.1$ from the boundary), consistent with the amplitude-deficiency mechanism discussed in Section \ref{subsec:refinement}. The two stages are thus limited in entirely different ways, and only the core-scale error of the refined fields is a genuine resolution effect.

\begin{figure}[H]
\centering

\includegraphics[width=\textwidth]{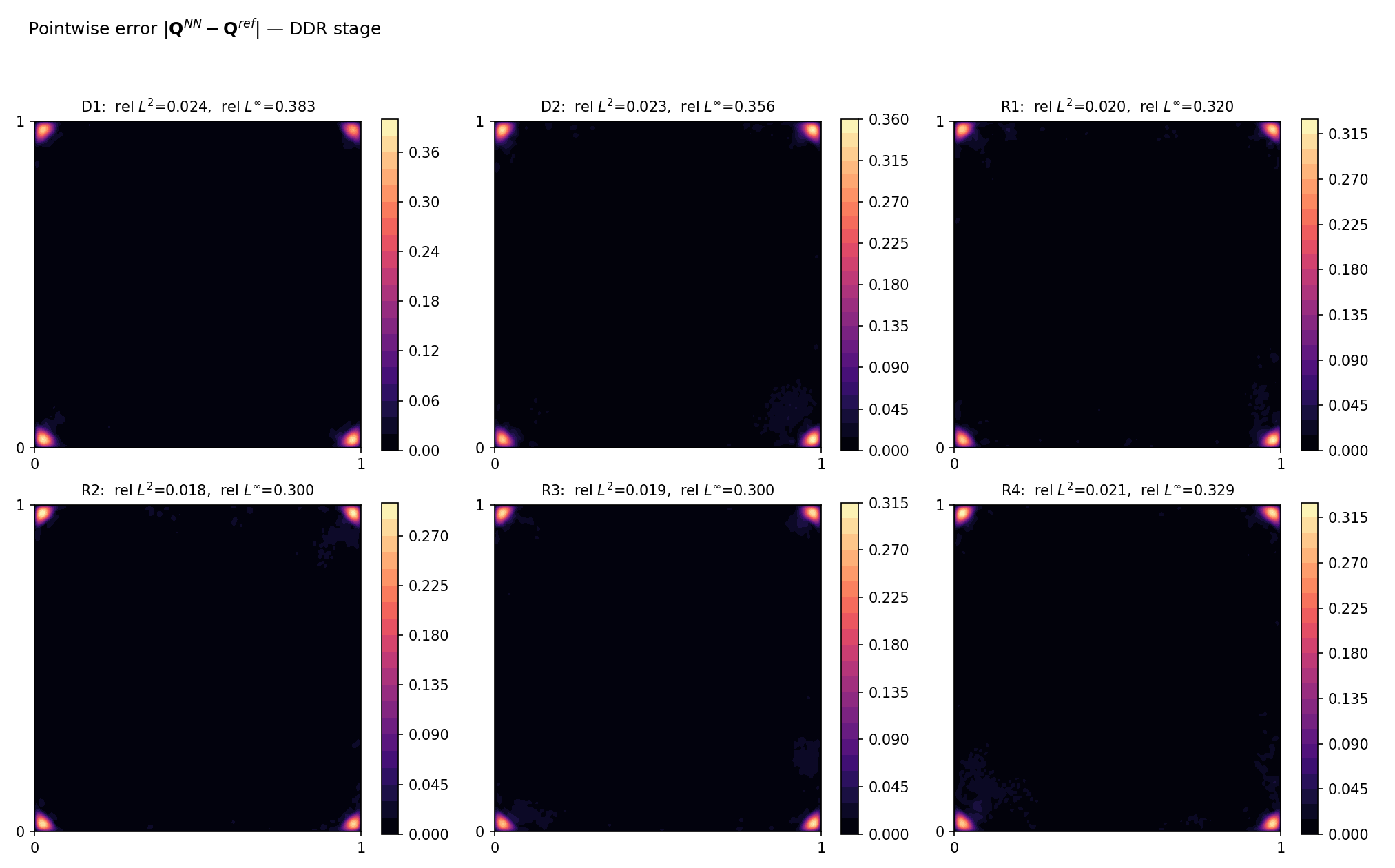}
\caption{\revtwo{Contour maps of the pointwise error $\vert \mathbf Q^{NN}-\mathbf Q^{ref}\vert$ of the six Deflation--Deep-Ritz solutions. The error is concentrated at the four corner defect cores ($59$--$77\%$ of the squared error within the four corner squares of side $0.1$); away from the cores the fields are essentially exact.}}
\label{fig:DDRerr}

\vspace{0.5em}

\includegraphics[width=\textwidth]{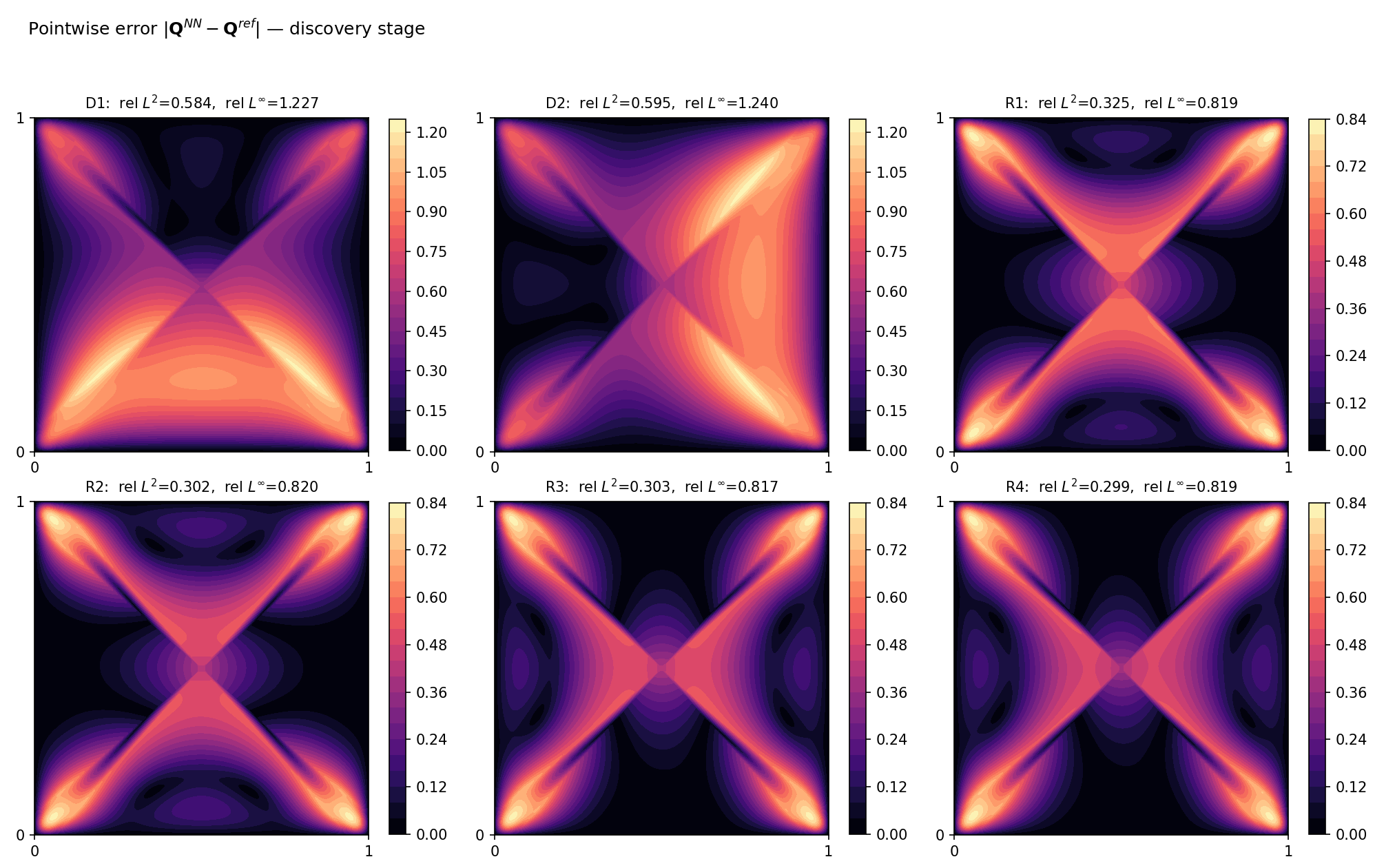}
\caption{\revtwo{Contour maps of the pointwise error $\vert \mathbf Q^{NN}-\mathbf Q^{ref}\vert$ of the six branches of the \emph{discovery} stage (Table \ref{tab:relL2Differences}, second column), on the same layout as Figure \ref{fig:DDRerr}. In contrast to the refined solutions, the error is a bulk phenomenon---broad amplitude-deficit regions in the interior rather than localized defect-core spots---which is the visual signature of the watershed mechanism of Section \ref{subsec:refinement}: the pointwise reaction residual tolerates fields with $\vert\Qvec\vert$ below its physical value, so residual training leaves the scalar order parameter under-saturated in the bulk while the director topology is already correct.}}
\label{fig:discerr}

\end{figure}

\begin{figure}[H]
\centering
\includegraphics[width=\textwidth]{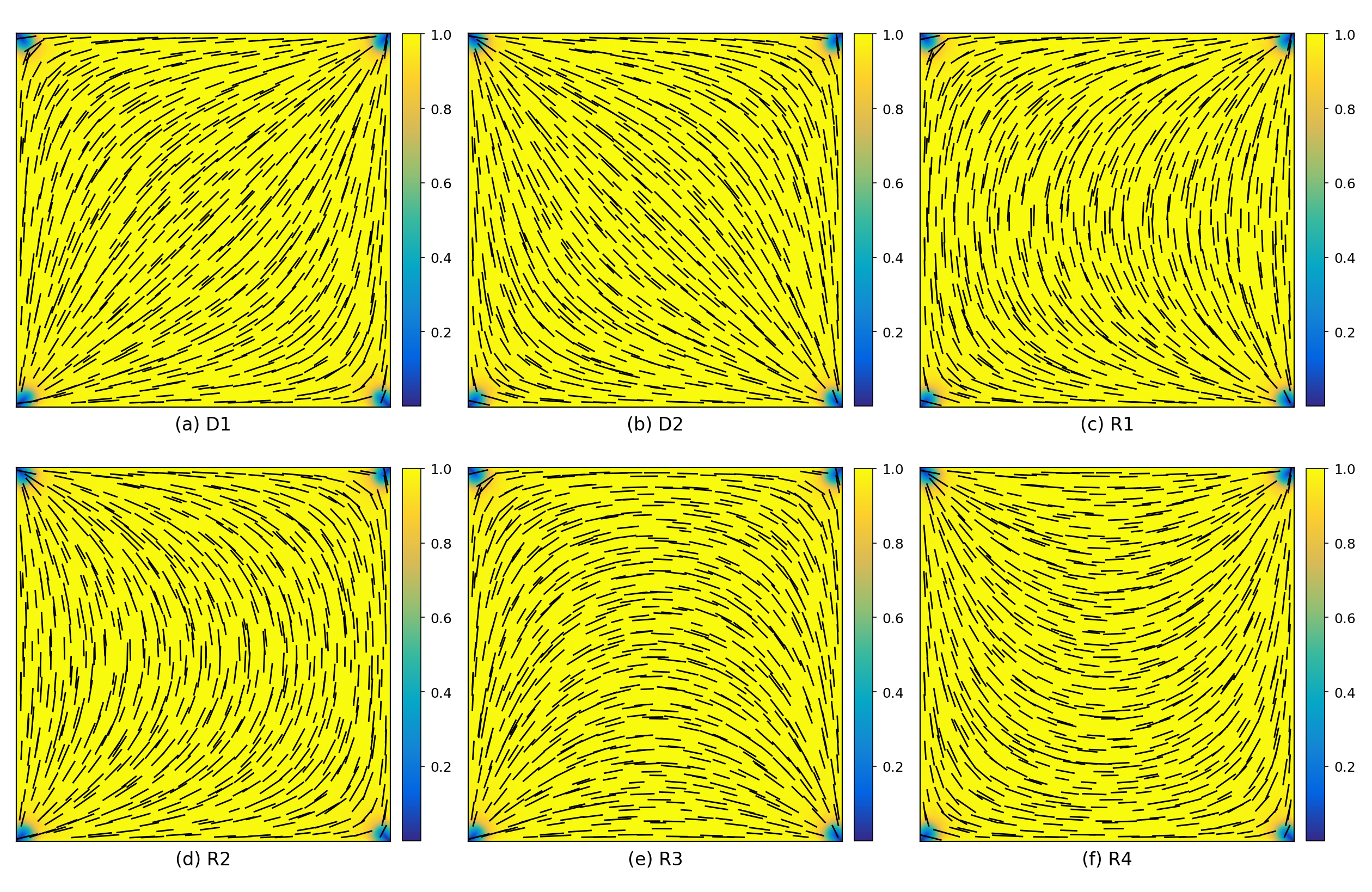}
\caption{\revtwo{The six solutions obtained by the Deflation--Deep-Ritz refinement (Table \ref{tab:relL2Differences}, Deflation--Deep-Ritz block): nematic director (white segments) over the scalar order parameter $\vert\Qvec\vert$ (color). Every branch exhibits $\vert\Qvec\vert \approx 1$ in the bulk, defect cores only at the four corners, no interior order-reconstruction walls, and the exact director topology of its stable state.}}
\label{fig:DDRSolutions}
\end{figure}
}

\revtwo{\begin{rem}
    We note that, in certain applications, the underlying energy functional may be
    nonsmooth and therefore not classically differentiable
    \cite{Baemmi19PGS,Bach23VEP}. This does not preclude the use of the proposed
    framework. When required by the optimization procedure, the original energy
    functional may be replaced by a suitable smooth approximation, for instance
    through Moreau--Yosida regularization or related infimal-convolution techniques \cite{More73PEAC,BauCom17CAMH, Ba26GMYR}. More generally, nonsmooth
    formulations may also be treated using tools from subdifferential and proximal
    calculus \cite{BauCom17CAMH,RockWets98VA}. We further emphasize that the underlying deflation principle is not intrinsically
    tied to neural-network hypothesis spaces. In principle, both the residual-based Deflation-PINN formulation and the variational Deflation-Deep-Ritz formulation can be combined with other suitable trial or hypothesis spaces, provided that the
    corresponding loss or energy functional can be evaluated and minimized over that
    space. Examples include reproducing kernel Hilbert spaces (RKHS)
    \cite{HofSchSmo08, krom26}, finite-dimensional random-feature spaces, such as random
    Fourier features \cite{RahimiRecht07}, or classical finite-element spaces. Thus, the deflation mechanism may
    be viewed more generally as a strategy for identifying multiple solutions of
    nonlinear variational or residual-minimization problems, rather than as a
    construction specific to neural-network approximations.
\end{rem}}

\revtwo{\subsection{A Second Benchmark: Allen--Cahn on the Unit Disk}
\label{subsec:AllenCahn}
To demonstrate that Deflation-PINNs are not tailored to the Landau--de Gennes problem, we apply the identical framework to a second multi-solution benchmark from a different equation family, on a different geometry, with homogeneous instead of non-homogeneous Dirichlet data: the Allen--Cahn-type problem
\[
    -\Delta u = \lambda u - u^3 \text{ in } B(0,1)\subset \mathbb R^2, \qquad u = 0 \text{ on } \partial B(0,1),
\] where we choose $\lambda=6$ in our experiments. 
The number of solutions of this problem is completely determined by the position of the coefficient $\lambda = 6$ relative to the Dirichlet spectrum of the Laplacian on the disk; this is the two-dimensional analogue of the classical Chafee--Infante structure \cite{ChafeeInfante1974}. Write $\lambda_1 = j_{0,1}^2 \approx 5.783$ and $\lambda_2 = j_{1,1}^2 \approx 14.68$ for the first two Dirichlet eigenvalues of the unit disk ($j_{\nu,1}$ denoting the first positive zero of the Bessel function $J_\nu$). For $\lambda \le \lambda_1$, testing the equation with $u$ gives $\Vert\nabla u\Vert_{L^2}^2 = \lambda \Vert u \Vert_{L^2}^2 - \Vert u\Vert_{L^4}^4$, which together with the Poincar\'e inequality $\lambda_1 \Vert u \Vert_{L^2}^2 \le \Vert\nabla u\Vert_{L^2}^2$ forces $u \equiv 0$. For $\lambda > \lambda_1$ a positive solution $u^*$ exists (the associated energy becomes negative along the principal eigenfunction, so its global minimizer is nontrivial); it is radially symmetric by the moving-plane theorem of Gidas--Ni--Nirenberg \cite{GidasNiNirenberg1979} and it is \emph{unique} among positive solutions by the Brezis--Oswald criterion \cite{BrezisOswald1986}, since the quotient $(\lambda u - u^3)/u = \lambda - u^2$ is strictly decreasing in $u > 0$. Finally, any sign-changing solution would have at least two nodal domains, and its restriction to each nodal domain $D$ is a one-signed solution of the same equation there, which forces $\lambda > \lambda_1(D)$; since two disjoint subdomains of a domain $\Omega$ always satisfy $\max_i \lambda_1(D_i) \ge \lambda_2(\Omega)$ by the variational characterization of eigenvalues, sign-changing solutions require $\lambda \ge \lambda_2 \approx 14.68$. Hence, for $\lambda_1 < 6 < \lambda_2$, the solution set is exactly $\{0, +u^*, -u^*\}$, i.e.\ the number of solutions the Deflation-PINN must recover is known rigorously to be $K = 3$. The same equation family is used in the random-initialization study of \cite{multipleSolPINN}.

We train a single Deflation-PINN with $K=3$ branches (shared trunk with $6$ hidden layers of width $100$, $\tanh$ activation, $p = 32$ branch features), enforcing the homogeneous Dirichlet condition exactly through the radial construction of Appendix \ref{app:radial} applied to the disk with $h(t) = 1 - t^2$, i.e.\ $u = (1-r^2)\cdot N_\theta$. The total loss is
\[
    \mathcal L_{total} = \alpha \cdot \sum_{k=1}^{3} \mathcal E_G(u_k) + \beta \cdot \mathcal L_{Def}, \qquad \alpha = 100, \quad \beta = 1,
\]
where $\mathcal E_G$ is the mean squared PDE residual of branch $k$ and $\mathcal L_{Def}$ is the deflation loss of Section \ref{Subsection Deflation-PINNs} with $d_{min} = 0.2$, the pairwise distance being measured in the discrete mean-absolute metric of the implementation (see the Reproducibility Statement). Training uses AdamW for $40{,}000$ epochs followed by an L-BFGS polishing stage in double precision (full details in the Reproducibility Statement); on this smooth benchmark, direct residual training attains reference accuracy without a classical refinement stage.

The choice of $d_{min}$ deserves emphasis, since this benchmark exhibits the failure mode discussed in our Future Work section in a measurable way: the separation between the trivial branch and $\pm u^*$ is $\Vert u^* - 0\Vert \approx 0.272$ (in the discrete mean-absolute metric used by the implementation). An initial experiment with $d_{min} = 0.4 > 0.272$ therefore \emph{cannot} terminate its deflation term---the hinge penalty keeps pushing the branches apart beyond the true separation---and it converged to visibly inflated fields (root-mean-square value $\approx 0.47$ instead of $0.32$ for $u^*$, mean residual $\approx 0.14$). Choosing $d_{min}=0.2$, below the true minimal separation, removes the obstruction. The practical rule is that $d_{min}$ must lower-bound the minimal pairwise distance of the true solutions; when it does not, the deflation loss itself reports the problem, since it cannot reach zero.

With $d_{min}=0.2$ the Deflation-PINN recovers all three solutions from a single training run, see Figure \ref{fig:ACSolutions}.

\paragraph{Reference solution.} The comparison uses no discretized PDE solve and no external data: by the symmetry and uniqueness results quoted above, the positive solution is radial and unique, so $u^*$ is \emph{exactly} characterized by the two-point boundary value problem $u'' + u'/r + 6u - u^3 = 0$, $u'(0)=0$, $u(1)=0$, which we solve by shooting. For an amplitude $a>0$ we integrate the initial value problem from $r_0 = 10^{-8}$ with $u(r_0)=a$, $u'(r_0)=0$ (adaptive Runge--Kutta of order 4(5), relative tolerance $10^{-11}$, absolute tolerance $10^{-12}$) and solve $u(1;a)=0$ for $a$ by bisection to a tolerance of $10^{-13}$. A sign change is guaranteed: for small $a$ the linearization gives $u(1;a) \approx a\,J_0(\sqrt 6) < 0$ because $\sqrt 6 > j_{0,1}$, while for large $a$ the cubic term dominates and $u(1;a) > 0$. This yields $u^*(0) = 0.6170166479$ and $\Vert u^*\Vert_{L^2(B)} = 0.571449$; the trivial solution and $-u^*$ complete the solution set. The reference is therefore accurate to the ODE integration tolerance, several orders below the errors we report.

Errors are evaluated on a polar grid of $401\times401$ points that is disjoint from the training collocation set, with the $L^2(B)$ integrals computed by tensorized Simpson quadrature including the area element $r\,dr\,d\vartheta$. The results are collected in Table \ref{tab:ACerrors}: the relative $L^2$ errors of the two nontrivial branches are of order $10^{-5}$--$10^{-4}$, and the branch matched to the trivial solution vanishes to $2\cdot 10^{-5}$. This demonstrates that the deflation mechanism is problem-independent and that, on a benchmark without singular structures, Deflation-PINNs attain the accuracy customary for PINN methods.

\begin{table}[H]
\begin{center}
\begin{tabular}{ | m{8em} | m{7em} | m{7em} | m{8em} |}
    \hline
    Branch matched to & Rel.\ $L^2$ error & Abs.\ $L^2$ error & Mean $\vert$residual$\vert$ \\
    \hline
    $+u^*$ & $4.7\cdot 10^{-5}$ & $2.7\cdot 10^{-5}$ & $5.5\cdot 10^{-4}$ \\
    \hline
    $-u^*$ & $1.4\cdot 10^{-4}$ & $8.2\cdot 10^{-5}$ & $6.1\cdot 10^{-4}$ \\
    \hline
    $u \equiv 0$ & --- & $2.0\cdot 10^{-5}$ & $1.3\cdot 10^{-4}$ \\
    \hline
\end{tabular}
\end{center}
\caption{\revtwo{Allen--Cahn benchmark: errors of the three branches learned by a single Deflation-PINN against the radial shooting reference $u^*$ ($\Vert u^*\Vert_{L^2(B)} = 0.571449$), integrated on an independent $401\times401$ polar grid with area weights. For the trivial branch the relative error is not defined, so we report the absolute $L^2$ norm; the PDE residual is the mean of $\vert \Delta u + 6u - u^3\vert$ over a fine evaluation grid.}}
\label{tab:ACerrors}
\end{table}

\begin{figure}[H]
\centering
\includegraphics[width=\textwidth]{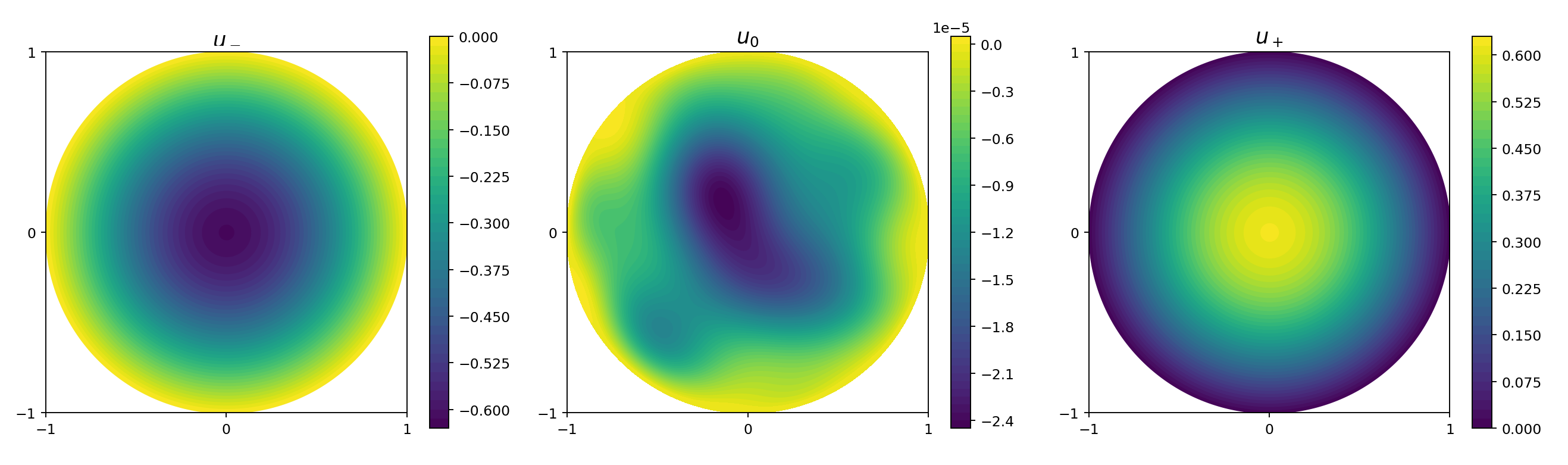}
\caption{\revtwo{The three solutions of the Allen--Cahn benchmark on the unit disk learned simultaneously by a single Deflation-PINN with $K=3$ branches, evaluated on a fine polar grid: (left) $u_-$, the branch matched to $-u^*$; (middle) $u_0$, the branch matched to the trivial solution (note the $10^{-5}$ color scale); (right) $u_+$, the branch matched to $+u^*$, with maximum $\approx 0.617$ in agreement with the shooting reference $u^*(0)\approx 0.61702$. Quantitative errors are reported in Table \ref{tab:ACerrors}.}}
\label{fig:ACSolutions}
\end{figure}
}

\revtwo{
\section{Discussion}
\label{sec:Discussion}

We introduced Deflation-PINNs, a framework for learning multiple solution branches of nonlinear PDEs within a single neural-network model. The method combines a finite-branch parameterization inspired by DeepONets with a physics-informed residual loss and a pairwise deflation term. The shared trunk provides a common representation of the spatial dependence, while learnable branch vectors encode the individual candidate solutions. The approximation results developed in Section \ref{Subsection Deflation-PINNs} and for the hard-constrained architecture establish that this parameterization is sufficiently expressive to approximate a prescribed finite family of solutions. The numerical experiments complement this approximation theory by clarifying both the capabilities and the limitations of the proposed training procedure.

For the Landau--de Gennes benchmark, a single Deflation-PINN run with \(K=6\) produced six branches that, under the gradient-flow/Newton census of Section \ref{subsec:refinement}, belong to the basins of the six distinct stable equilibrium states. The discovery-stage approximations are intentionally coarse: their relative \(L^2\) errors range from approximately \(0.30\) to \(0.59\). Nevertheless, they contain enough information to identify the corresponding equilibrium basins. Starting from these branches, the purely neural Deflation--Deep-Ritz refinement of Section \ref{subsec:DDR} reduces the relative \(L^2\) errors to \(1.2\%\)--\(1.3\%\), while the classical gradient-flow/Newton refinement reaches the mesh-converged reference solutions to numerical precision. In this sense, the main value of the residual-based Deflation-PINN on this benchmark is as a \emph{multi-solution discovery mechanism}: it produces several basin-correct initializations simultaneously, which may subsequently be refined to high accuracy.

At the same time, the additional experiments show that pairwise deflation should not be interpreted as a guarantee of distinct basin membership. The deflation loss constrains distances between the learned fields, whereas two fields separated by more than \(d_{\min}\) may still lie in the basin of the same equilibrium. This distinction is visible in the restart study of Appendix \ref{app:convergence}: although the particular run reported in Table \ref{tab:relL2Differences} recovers all six stable LdG states, independent runs at the same configuration recover between two and four distinct basins. The cumulative union of certified states nevertheless reaches all six equilibria after five runs. Thus, the practically relevant procedure is not to assume completeness from a single training run, but to combine simultaneous multi-branch discovery with a problem-dependent certification step and, when necessary, a small number of independent restarts. For the LdG problem, the gradient-flow/Newton census provides such a certification without access to reference solutions.

The experiments also reveal a useful division of labor between residual- and energy-based objectives. Residual minimization is insensitive to stability and is therefore suitable for exploring different parts of the solution landscape, but on the LdG benchmark its loss geometry permits substantial bulk amplitude errors. Energy minimization removes this failure mode and provides an effective refinement mechanism, but direct Deflation--Deep-Ritz training from random initialization preferentially populates the lowest-energy basins and does not provide comparable basin coverage. The two stages are therefore complementary: residual-based deflation is used for exploration, whereas energy-based optimization is used for refinement.

The Allen--Cahn benchmark provides a complementary test in which the complete solution set is known independently. For the chosen parameter value, the problem has exactly three solutions, and a single Deflation-PINN with \(K=3\) recovers all three with errors of order \(10^{-5}\)--\(10^{-4}\) without subsequent classical refinement. This experiment also exposes the role of \(d_{\min}\) particularly clearly: choosing \(d_{\min}\) above the true minimal inter-solution separation makes the deflation constraint incompatible with the solution set and prevents convergence to the correct branches. Together with the LdG experiments, this demonstrates that Deflation-PINNs are applicable beyond a single PDE or geometry, while also showing that the separation scale and basin coverage are intrinsic parts of the multi-solution optimization problem.

\subsection*{Future Work}

A first direction is to remove the requirement that the number of branches \(K\) be prescribed in advance. A natural extension is an adaptive discovery procedure in which training begins with a small number of branches and additional branches are introduced only as needed. Crucially, the present experiments show that a small residual together with a vanishing deflation penalty is not, by itself, sufficient evidence that a newly added branch represents a previously unseen solution. An adaptive method should therefore combine branch generation with a certification or identification step. For the LdG problem, the gradient-flow/Newton census used in Section \ref{subsec:refinement} provides such a mechanism: a candidate is accepted only when it converges to an equilibrium not already represented in the current collection. For more general PDEs, however, an analogous certification procedure may need to be based on continuation, Newton-type solvers, stability information, conserved quantities, or other problem-specific diagnostics. Developing certification criteria that extend beyond variational problems is therefore an important part of making adaptive-\(K\) Deflation-PINNs broadly applicable.

A second open problem is the choice of the separation mechanism itself. The Allen--Cahn experiment shows that \(d_{\min}\) cannot exceed the minimal separation of the solutions one wishes to recover, while the LdG experiments show that choosing a smaller admissible value does not prevent multiple branches from occupying the same basin. Moreover, the adaptive-\(d_{\min}\) and scale-free repulsion experiments of Appendix \ref{app:convergence} indicate that modifying the pairwise separation term alone does not resolve basin duplication. Future work should therefore investigate separation criteria that use more information than instantaneous field-space distance, for example continuation-based branch tracking, physically meaningful observables, or diversity measures coupled to a certification procedure. An adaptive estimate of the relevant distance scale may still be useful, but the present results suggest that it should complement rather than replace basin identification.

A third direction concerns unstable solutions. The residual formulation is stability-blind, and Section \ref{subsec:refinement} shows that the LdG problem contains genuine unstable wall-type solutions in addition to its six stable equilibria. This suggests using residual-based deflation with more branches than the number of stable states to explore the saddle-point structure of the solution landscape. Such a study would require not only discovering additional critical points but also classifying them, for example through linearized stability information or an appropriate notion of Morse index. This would extend Deflation-PINNs from the discovery of multiple stable equilibria to the computation of richer nonlinear solution landscapes.

Finally, both computational efficiency and theoretical guarantees remain open. On the LdG benchmark, the accuracy of the purely neural refinement is ultimately limited by the resolution of the defect cores and by the numerical quadrature used for the energy, while classical post-processing reaches reference accuracy at comparatively low additional cost. Adaptive quadrature, localized sampling near singular structures, and architectures better matched to multiple spatial scales are natural ways to reduce this gap. At the theoretical level, the present approximation results establish expressive power but do not provide guarantees that optimization will discover every solution or every basin. Establishing conditions under which deflated training yields reliable basin coverage---and quantifying how this depends on \(K\), \(d_{\min}\), initialization, and the underlying solution landscape---would provide a substantially stronger foundation for the method.}

\section*{Acknowledgments}
A.Bacho acknowledges support from the Air Force Office of Scientific Research (AFOSR) under awards FA9550-20-1-0358 (MURI: “Machine Learning and Physics-Based Modeling and Simulation”) and FA9550-24-1-0237, the U.S. Department of Energy (DOE), Office of Science, Office of Advanced Scientific Computing Research (ASCR) under award DE-SC0023163 (SEA-CROGS: “Scalable, Efficient and Accelerated Causal Reasoning Operators, Graphs and Spikes for Earth and Embedded Systems”), and the Office of Naval Research (ONR) under award N00014-25-1-2035. R. M. acknowledges the support of
Department of Science and Technology, India (INSPIRE Faculty Fellowship, IFA24-MA
208).

\bibliographystyle{abbrvnat}
\bibliography{bib_aras}
\newpage

\appendix
\section*{Appendix}

\section{Reproducibility Statement}
\revtwo{The experiments were performed on an NVIDIA V100 or an RTX 4060-Ti (8GB) in single precision, with the double-precision stages as declared in the paragraphs below, and the pipeline reproduces across these machines. The code for all experiments, including the revision harness, launch scripts and analysis, is available at \url{https://github.com/SeanDisaro/DeflationPINNs}}.
We evaluated the PIML loss at collocation points on a $33 \times 33$-grid and took the Riemann sum for this grid. The grid covers the domain $[0+\delta,1-\delta ]^2$ with $\delta=0.001 $. The safety distance is to facilitate training, and we can do this safely, as the boundary condition is enforced.\\
We used $d_{min} = 0.4$. \revtwo{This value, originally found empirically, is now quantified: the measured pairwise separations of the true solutions bracket the admissible range (Section \ref{subsec:refinement}), and the automated bisection of Appendix \ref{app:convergence} certifies the choice ($d_{min} = 0.4 \approx 0.8\, d^*$, with $d^*$ the trainability edge). We note that increasing $d_{min}$ does not remove basin duplication (Section \ref{subsec:refinement}); completeness is handled by the restart-with-certification protocol, not by tuning $d_{min}$.}
Thus, the deflation loss we used is given by
\[
    \mathcal L_{Def} (\mathbf G) := {1 \over 15} \sum _ {i = 1}^6\sum _ {j = i+ 1} ^6 \max \Big( 1 - {5 \over 2}\revtwo{|| \mathbf G(i, \cdot ) -  \mathbf G(j, \cdot )||_{L^2 ([0,1]^2, \mathbb R^2)}} , 0 \Big).
\]
To compute the \revtwo{$L^2([0,1]^2, \mathbb R^2)$} norm for the deflation loss \revtwo{(as the root-mean-square over the collocation points, which approximates the $L^2$ norm on the area-one domain)}, we use the same grid / collocation points, as for the PIML loss. 
The total loss function is of the form
\[
    \mathcal L _ {total } (\mathbf G) := \alpha\cdot \sum \limits _{i =1}^6  \mathcal E_G (\mathbf G(i, \cdot)) + \beta\cdot \mathcal L_{Def} (\mathbf  G)
\]
and we used \revtwo{$\alpha = 0.01$ and $\beta = 100$ (these values, as well as the learning rate below, correct two typos in the previous version of this statement and match the released code).}
In our experiments, we use the Adam optimizer to train for $10,000$ epochs with an initial learning rate of \revtwo{$1 \cdot 10 ^{-4}$}.
$\tau: \mathbb R ^2 \to \mathbb R ^p$ is of width $4000$ and has one layer. We use tanh as the activation function. We set $p = 16$, i.e.\ $\tau$ has $2p = 32$ outputs and each branch vector $\beta^k \in \mathbb R^{32}$ (Section \ref{subsec:Solving LdG with DeflationPINNs}).

\revtwo{\paragraph{Reference solutions and refinement (Section \ref{subsec:refinement}).} The reference solver uses the standard 5-point finite-difference Laplacian on uniform grids ($h = 1/256$ and $1/512$) with the exact trapezoidal Dirichlet data, and a damped Newton iteration (step halving on residual increase) on the $2\times$-coupled linearization, solved by a sparse direct factorization; termination once the residual falls below $10^{-10}$ (in practice all states terminate at $5\cdot 10^{-12}$ after three steps, cf.\ Section \ref{subsec:refinement}). The refinement flow is the semi-implicit scheme $(I - \Delta t\, \epsilon^2 \Delta_h)\Qvec^{n+1} = \Qvec^n + \Delta t\, 2(1-\vert \Qvec^n\vert^2)\Qvec^n$ with $\Delta t = 0.1$ for $3{,}000$ steps at $h = 1/256$ (one sparse factorization, reused), followed by the damped Newton iteration; each branch requires under two minutes on CPU.}

\revtwo{\paragraph{Deflation--Deep-Ritz refinement (Section \ref{subsec:DDR}).} Each branch is represented by an independent network: $128$ random Fourier features $x \mapsto (\sin, \cos)(2\pi B^\top x)$ with $B \sim \mathcal N(0, 16^2)$ (wavelengths matched to the core scale), followed by a tanh MLP with $3$ hidden layers of width $128$; component one carries the harmonic boundary extension as hard constraint, component two the boundary-vanishing factor alone. Self-distillation: $4{,}000$ Adam epochs fitting the discovery-stage fields. Ritz stage: composite-midpoint quadrature on cell-centered grids ($192^2$ for Adam, $16{,}000$ epochs, initial learning rate $10^{-3}$ with step decay; then $384^2$ for L-BFGS in double precision, strong Wolfe line search), loss weights $\alpha = 0.01$, $\beta = 100$, $d_{min} = 0.8$ in the full vector-field metric; finally a quadrature continuation replacing the midpoint rule by tensorized Simpson weights on a $321^2$ nodal grid ($200$ further double-precision L-BFGS steps, warm-started from the midpoint solution). The first two stages take a few GPU hours on a single V100; the Simpson continuation was executed in double precision on a 32-core CPU ($\approx 30$ hours).}

\revtwo{\paragraph{Allen--Cahn benchmark (Section \ref{subsec:AllenCahn}).} The trunk network has $6$ hidden layers of width $100$ with tanh activation and $p=32$; $K = 3$ branches. Collocation points are the nodes of a $33\times 33$ grid over $[-1,1]^2$ restricted to the open unit disk ($\approx 800$ points). The PIML term is the mean squared PDE residual summed over branches with weight $\alpha = 100$; the deflation term uses the discrete mean-absolute distance with $d_{min}=0.2$, hinge weight $\beta = 1$. Training: AdamW with initial learning rate $10^{-4}$ and a step decay (factor $0.8$ every $2{,}000$ epochs) for $40{,}000$ epochs in single precision, followed by $100$ L-BFGS steps (strong Wolfe line search, history $50$) in double precision. These runs were performed on a single V100 GPU; total training time is below one hour.}

\revtwo{\paragraph{Convergence, restart, variant and bisection studies (Appendix \ref{app:convergence}).} All appendix runs use the discovery protocol above with one quantity varied at a time; the restart study varies only the random initialization. Certification of basin membership uses the gradient-flow/Newton procedure of Section \ref{subsec:refinement} (CPU, minutes per branch). The $d_{min}$ bisection uses reduced budgets per query (Allen--Cahn: $15{,}000$ Adam epochs and $30$ L-BFGS steps; LdG: $10{,}000$ Adam epochs) with the residual-based classifier specified in Appendix \ref{app:convergence}; queries take minutes (Allen--Cahn) to $\approx 25$ minutes (LdG) each. These runs were executed in single precision on a V100 or an RTX 4060-Ti.}

\revtwo{\section{Radial Extensions on Star-Shaped Domains}
\label{app:radial}
This appendix contains the general construction of the boundary-data extension used for the hard constraints; in the main text only its instances on the square and on the disk are used.

Let $\Omega \subset \mathbb R^d$ be a bounded, open, and star-shaped domain (see Definition \ref{defStarDomain}) with respect to $x_0 \in \Omega$. We consider $\Omega$ in spherical coordinates with center $x_0$, i.e., for a point $x \in \Omega$ given in Cartesian coordinates we write
\begin{align*}
    x \cong (\varphi_{x}^{x_0}, \theta^{x_0}_x , r^{x_0}_x) &:= (\varphi_{x-x_0}^{0}, \theta^{0}_{x-x_0} , r^{0}_{x-x_0})\\
    &:= (\varphi_{x-x_0}, \theta_{x-x_0} , \Vert x- x_0\Vert _2) \in [0, 2\pi ) \times [0, \pi)^{d-2} \times [0, \infty ),
\end{align*}
the usual representation of $x-x_0$ in spherical coordinates.
Each boundary point $x \in \partial \Omega$ corresponds to one and only one angle, i.e. $\partial \Omega \longleftrightarrow [0, 2\pi ) \times [0, \pi)^{d-2} \longleftrightarrow \mathbb S^1 \times \mathbb S^{d-2}$, where $\mathbb S^n =  \partial B(0,1) \subset \mathbb R ^{n+1}$, and these mappings are continuous. This gives rise to the following definition.
\begin{Def}
    \label{def:radial boundary distanc(rbd function)}
    Let $\Omega \subset \mathbb R^d$ be a bounded, open, and star-shaped domain with respect to $x_0 \in \Omega$. We define the radial boundary distance function (RBD function) $r_{\Omega , x_0}: \bar \Omega \setminus x_0 \to [0, \infty) $ of $\Omega$ with respect to $x_0$, such that for all $x \in \bar \Omega \setminus \{x_0 \} $ we have
    \[
        r_{\Omega , x_0}(x) :=  \Vert x_b -x_0\Vert_2,
    \]
    where $x_b \in \partial \Omega$ is such that $x \in \{ (1-t)x_0 + t x_b ~|~ t \in [0,1]\}$ lies on the line segment connecting $x_b$ and $x_0$.
\end{Def}
Definition \ref{def:radial boundary distanc(rbd function)} is to be understood so that every point (except the center) is mapped radially to a boundary point, and the function returns the distance from the center to that boundary point; in particular $(\varphi^{x_0}_x, \theta^{x_0}_x) = (\varphi^{x_0}_{x_b}, \theta^{x_0}_{x_b})$.
\begin{example}
    The ball $B(x_0, R) \subset \mathbb R^d$ with radius $R>0$ and center $x_0$ is star-shaped with respect to $x_0$, and its RBD function is $r_{B(x_0, R) , x_0}:x \mapsto R$ for all $x \in \bar B(x_0, R) \setminus \{x_0 \}$.
\end{example}
Spherical continuations are now defined in terms of the RBD function.
\begin{Def}
    \label{def radial Extrapolation}
    Let $\Omega \subset \mathbb R^d$ be a bounded, open and star-shaped domain with respect to $x_0$ and let $r_{\Omega , x_0}$ be a RBD function for $\Omega$. Let $h:[0,1] \to [0,1]$ be continuous with $h\vert _{[0,1)}> 0 $, $h(0) = 1$ and $h(1) = 0$, and let $f \in C(\partial \Omega )$. Define
    \[
        \tilde\omega (x) : \begin{cases*}
            \Omega \to [0,1] \\
            x \mapsto\begin{cases*}
                h({\Vert x- x_0\Vert \over r_{\Omega , x_0} (x)}),\text{ if }x \neq x_0\\
                1,\text{ if }x = x_0.
            \end{cases*}
        \end{cases*} \text{ and }       \tilde f : \begin{cases*}
            \Omega \to \mathbb R \\
            x \mapsto f(\varphi _{x}^{x_0},\theta _{x}^{x_0}) \cdot\left(1-\tilde\omega (x) \right).
        \end{cases*}
    \]
    Then $\tilde\omega, \tilde f \in C( \bar \Omega)$, $\tilde \omega = 0 $ on $\partial \Omega$, $ \tilde \omega > 0$ in $\text{int}({\Omega})$, and $\tilde f \vert _{\partial \Omega} = f$.
\end{Def}
If $h$ is smooth and $\Omega$ and $f:\partial \Omega \to \mathbb R$ are smooth, then $\tilde \omega$ and $\tilde f$ are smooth. The general difficulty of this approach is to find an RBD function for the domain, which in general is not easy. Note that $\tilde \omega$ satisfies the conditions required of $\omega$ in the ansatz of Section \ref{Section Hard Constraints For Boundary Conditions}; on the square, however, we use only $\tilde f$ (for the extension of the boundary data) and take the $C^\infty$-construction of \cite{lu2021deepxde} for $\omega$, whereas on the disk (Section \ref{subsec:AllenCahn}) the boundary data vanish and $\tilde\omega$ itself, with $h(t) = 1 - t^2$, serves as the boundary-vanishing factor $\omega$.}

\revtwo{\section{Convergence and Deflation-Variant Studies}
\label{app:convergence}
This appendix collects the convergence studies in collocation density and network width and first empirical tests of the deflation-loss variants of Section \ref{Subsection Deflation-PINNs}. All runs use the discovery protocol and hyperparameters of Section \ref{sec:Experiments} (full vector-field deflation, $d_{min}=0.4$, $10{,}000$ Adam epochs, fixed seed except in the restart study below), varying one quantity at a time; they were executed in single precision on a single GPU (NVIDIA V100 or RTX~4060~Ti; the pipeline reproduces across the two machines). For each run, every branch is matched to a reference state, and---as in Section \ref{subsec:refinement}---flowed under the LdG gradient flow followed by Newton iteration; ``distinct basins'' counts how many \emph{different} stable states the six branches reach under this census.

\begin{table}[H]
\begin{center}
\begin{tabular}{ | m{13em} | m{5.5em} | m{13em} |}
    \hline
    Configuration & Distinct basins & Rel.\ $L^2$ error (min\,/\,median\,/\,max) \\
    \hline
    \multicolumn{3}{|l|}{\emph{Collocation sweep (width $4000$):}} \\
    \hline
    $17\times 17$ & $4/6$ & $0.29$ / $0.59$ / $0.81$ \\
    \hline
    $25\times 25$ & $4/6$ & $0.30$ / $0.47$ / $0.92$ \\
    \hline
    $33\times 33$ (Section \ref{sec:Experiments}) & $6/6$ & $0.30$ / $0.31$ / $0.59$ \\
    \hline
    $49\times 49$ & $5/6$ & $0.29$ / $0.29$ / $0.87$ \\
    \hline
    \multicolumn{3}{|l|}{\emph{Width sweep ($33\times 33$ collocation):}} \\
    \hline
    width $500$ & $3/6$ & $0.30$ / $0.61$ / $0.98$ \\
    \hline
    width $1000$ & $6/6$ & $0.30$ / $0.30$ / $0.59$ \\
    \hline
    width $2000$ & $5/6$ & $0.30$ / $0.44$ / $0.86$ \\
    \hline
    width $4000$ (Section \ref{sec:Experiments}) & $6/6$ & $0.30$ / $0.31$ / $0.59$ \\
    \hline
\end{tabular}
\end{center}
\caption{\revtwo{Convergence of the discovery stage in collocation density and network width. Per configuration: number of distinct stable states reached by the six branches under the gradient-flow/Newton census, and the minimum, median and maximum relative $L^2$ error of the branches against the mesh-converged reference (branches matched to states by minimum-cost assignment).}}
\label{tab:convergence}
\end{table}

Two features of Table \ref{tab:convergence} stand out. First, the attainable accuracy is \emph{flat} across both sweeps: the best-matched branches sit at relative errors $0.29$--$0.30$ in every configuration, independent of the collocation density ($17^2$--$49^2$) and of the width ($500$--$4000$). The discovery-stage error does therefore not behave like a discretization error, consistent with the mechanism identified in Section \ref{subsec:refinement}: it is dominated by the bulk amplitude deficit that the flat reaction-residual landscape tolerates, and neither denser collocation nor larger capacity removes it---the refinement stages of Sections \ref{subsec:refinement} and \ref{subsec:DDR} do. Second, what does vary between configurations is \emph{basin coverage} (between $3/6$ and $6/6$, without a monotone trend): a mis-configured run does not fail by losing accuracy but by duplicating basins, which is precisely the failure mode discussed in Section \ref{subsec:refinement} (deflation constrains distances, not basin membership).

\paragraph{Feature-space deflation.} Our architecture makes the feature-space variant of the deflation loss particularly easy to test: each solution is encoded by its branch vector $\beta^k$, so we replace the field-space deflation distance by the normalized Euclidean distance between branch vectors, with hinge, $d_{min}$ and training budget unchanged. The run separates the branch vectors as intended, but the fields are not controlled by this separation: the six branches occupy only three distinct basins (D1, D2, R1), and no branch reaches the field-space accuracy floor (relative errors $0.59$--$0.66$, against $0.30$--$0.59$ for field-space deflation under the identical budget). The mechanism is transparent---well-separated parameter vectors can render nearly identical fields---so separation should be enforced in the space in which distinctness is actually required.

\paragraph{Single-stage energy training and adaptive separation terms.} We further tested whether the discovery stage can be skipped altogether by minimizing the Deflation--Deep-Ritz loss of Section \ref{subsec:DDR} directly from random initialization, in three variants: (i) fixed $d_{min}=0.4$; (ii) $d_{min}$ annealed from $0.4$ to $0.9$ during training (an adaptive deflation parameter); (iii) the scale-free inverse-power repulsion of Section \ref{Subsection Deflation-PINNs} with exponent $q=2$ and repulsion scale $0.5$ in place of the hinge. All three collapse in the same way: the six branches distribute over only the \emph{two lowest-energy} states D1 and D2, with the minimum pairwise branch distance settling exactly at the imposed constraint ($0.400$ and $0.900$ for the hinge variants (i) and (ii), and $0.535$, just above the repulsion scale $0.5$, for (iii)). The separation term is satisfied---by pairing up within the two dominant basins rather than by spreading over all six. Energy descent selects basins by depth and volume, and no pairwise separation term we tested, fixed, annealed, or scale-free, redirects branches into the four remaining basins. This is the clearest evidence for the division of labor in our pipeline: residual-based deflation \emph{discovers} basins, energy-based training \emph{refines} within them. Consistently, over-generation within the residual protocol does not enforce full coverage either: with $K=11$ branches at $d_{min}=0.4$, nine branches certify under the census of Section \ref{subsec:refinement} yet occupy only four distinct stable states.

\paragraph{Restart study at the published configuration.} Table \ref{tab:restarts} reports eight independent discovery runs at the published configuration ($33\times 33$ collocation, width $4000$)---identical protocol, different random initializations---together with the basins certified by the gradient-flow/Newton census and the cumulative union. Individual runs certify between two and four basins (mean $3.1$); the certified union reaches $6/6$ after five runs and stays complete. The per-state frequencies (D2 in $8/8$ runs, D1 and R3 in $6/8$, R4 in $3/8$, R1 and R2 in $1/8$ each) quantify the basin anisotropy that a single run faces and confirm that duplication, not accuracy, is the practical failure mode of the discovery stage; the restart-with-certification protocol of Section \ref{subsec:refinement} is the systematic remedy.

\begin{table}[H]
\begin{center}
\begin{tabular}{ | m{3em} | m{13em} | m{5em} | m{7em} |}
    \hline
    Run & Certified basins & Distinct & Cumulative union \\
    \hline
    1 & D1, D2, R3 & $3/6$ & $3/6$ \\
    \hline
    2 & D1, D2, R3, R4 & $4/6$ & $4/6$ \\
    \hline
    3 & D1, D2, R3 & $3/6$ & $4/6$ \\
    \hline
    4 & D1, D2, R3, R4 & $4/6$ & $4/6$ \\
    \hline
    5 & D2, R1, R2, R3 & $4/6$ & $6/6$ \\
    \hline
    6 & D1, D2 & $2/6$ & $6/6$ \\
    \hline
    7 & D2, R3 & $2/6$ & $6/6$ \\
    \hline
    8 & D1, D2, R4 & $3/6$ & $6/6$ \\
    \hline
\end{tabular}
\end{center}
\caption{\revtwo{Restart study: eight independent discovery runs at the published configuration, the stable states certified by the gradient-flow/Newton census, and the cumulative union of certified basins. The union covers all six states after five runs; the census is unsupervised, so ``stop when consecutive restarts add no new basin'' is a practical stopping rule.}}
\label{tab:restarts}
\end{table}

\paragraph{Automated selection of $d_{min}$ by bisection.} The admissible range of Section \ref{subsec:refinement} can be located automatically, without reference solutions. The final PDE residual of a (reduced-budget) discovery run is a two-regime function of $d_{min}$---small below the trainability edge, elevated above it---so a bisection that classifies each query by ``final residual below the geometric mean of the current endpoint residuals'' localizes the edge in logarithmically many runs. On the Allen--Cahn benchmark (endpoints $0.2/0.4$; $15{,}000$ Adam epochs and $30$ L-BFGS steps per query, minutes each) four queries yield the interval $[0.275, 0.288]$, which lies within $6\%$ of the true minimal separation $0.272$ of Section \ref{subsec:AllenCahn} (the query at $0.275$, one percent above the separation, is still classified as trainable because the branches absorb the marginal excess by a slight amplitude inflation, visible as a $40\%$ residual increase in Table \ref{tab:bisect}); the mechanism is visible in the hinge itself, whose terminal value is zero for the queries at or below $0.275$ and positive for those at or above $0.2875$, while the residual jumps by a factor $3$--$5$ across the same threshold. The bisection thus recovers the geometry of the solution set from training behavior alone. On the LdG problem (endpoints $0.4/0.85$; $10{,}000$ epochs per query) the same procedure places the trainability edge in $[0.48, 0.51]$---far below the geometric bound $1.086$, with the hinge satisfied on both sides of the edge: here the transition is set by the early-training dominance of the deflation term discussed in Section \ref{subsec:refinement}, and the bisection quantifies the safety margin of the published value ($d_{min} = 0.4 \approx 0.8\, d^*$). Repeat runs with a second seed refine this picture: $d_{min}=0.48$ is reliably trainable (final residuals $2.1$ for both seeds), while at $0.51$ the outcome is seed-dependent (residuals $5$--$33$)---the LdG edge is a narrow transition band around $0.5$ rather than a sharp point, as expected for an optimization-driven (rather than geometric) threshold. The resulting recipe is: bisect for the edge $d^*$, choose $d_{min}$ a safety factor below it, and leave completeness to the restart-with-certification protocol above. Table \ref{tab:bisect} lists every query of both bisections, so the two transition signatures---the hinge signature on Allen--Cahn, the residual plateau on LdG---can be read off directly.}

\begin{table}[H]
\begin{center}
\begin{tabular}{ | m{8em} | m{7em} | m{6.5em} | m{5em} |}
    \hline
    $d_{min}$ & Final residual & Terminal hinge & Verdict \\
    \hline
    \multicolumn{4}{|l|}{\emph{Allen--Cahn (true minimal separation $\approx 0.272$):}} \\
    \hline
    $0.2$ (endpoint) & $4.0\cdot 10^{-5}$ & $0.000$ & good \\
    \hline
    $0.25$ & $4.2\cdot 10^{-5}$ & $0.000$ & good \\
    \hline
    $0.275$ & $5.9\cdot 10^{-5}$ & $0.000$ & good \\
    \hline
    $0.2875$ & $1.8\cdot 10^{-4}$ & $0.0028$ & bad \\
    \hline
    $0.3$ & $2.0\cdot 10^{-4}$ & $0.027$ & bad \\
    \hline
    $0.4$ (endpoint) & $1.4\cdot 10^{-4}$ & $0.193$ & bad \\
    \hline
    \multicolumn{4}{|l|}{\emph{LdG (geometric bound $1.086$; edge is optimization-driven):}} \\
    \hline
    $0.4$ (endpoint) & $2.15$ & $0.000$ & good \\
    \hline
    $0.4562$ & $5.25$ & $0.000$ & good \\
    \hline
    $0.4843$ & $2.12$ & $0.000$ & good \\
    \hline
    $0.4843$ (seed 1) & $2.09$ & $0.000$ & good \\
    \hline
    $0.5125$ & $33.1$ & $0.000$ & bad \\
    \hline
    $0.5125$ (seed 1) & $5.16$ & $0.000$ & good$^{\dagger}$ \\
    \hline
    $0.625$ & $110$ & $0.000$ & bad \\
    \hline
    $0.85$ (endpoint) & $17.6$ & $0.000$ & bad \\
    \hline
\end{tabular}
\end{center}
\caption{\revtwo{All queries of the two $d_{min}$ bisections, sorted by $d_{min}$. Allen--Cahn block: the terminal hinge value is zero for $d_{min} \le 0.275$ and positive for $d_{min} \ge 0.2875$, while the final residual jumps by a factor $3$--$5$ across the same threshold---the transition localizes the true minimal separation $0.272$ to within $6\%$, i.e.\ it recovers the geometry of the solution set. LdG block: the hinge is satisfied on \emph{both} sides of the edge, which instead manifests as a residual plateau ($^{\dagger}$the disagreement between the two seeds at $d_{min}=0.5125$ is the seed-dependent transition band around $0.5$ discussed in the text).}}
\label{tab:bisect}
\end{table}

\section{Theorems and Proofs}

\begin{Def}[Star Domain]
\label{defStarDomain}
Let $\Omega \subset \mathbb R^d$ be an open domain. $\Omega$ is called a star domain with center $x_0 \in \Omega$, if for all $y \in \Omega$ and $t \in [0,1]$ we have that
\[
    tx_0 +(1-t)y \in \Omega
\]
\end{Def}



\begin{lemma}
\label{lemmaForApprox}
    For $n \in \mathbb N$ let $N_n \subset \mathbb R^d$ be compact sets. Assume that $N_n \supset N_{n+1} $ for all $n \in \mathbb N$ and that $N := N_1 = \bigcup \limits _{n \in \mathbb N} N_n \subset\mathbb R^d $ is bounded. Assume that $h \in C(\Bar{N })$ and $h \vert {\bigcap \limits _{n \in \mathbb N} N_n} = 0$. Then
    \[
    \norm{h}_{\infty, N_n} \xrightarrow{n \to \infty} 0
    \]
\end{lemma}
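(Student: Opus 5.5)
The plan is a compactness argument by contradiction. First, because the sets are nested, $N_{n+1} \subset N_n$, the sequence $a_n := \norm{h}_{\infty, N_n}$ is nonincreasing and bounded below by $0$, so it converges to some $a \ge 0$. It suffices to show $a = 0$. (If some $N_n$ is empty we use the convention $a_n = 0$, and then all later terms vanish as well.)

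Suppose instead that $a > 0$. Then $a_n \ge a$ for every $n$. Since $N_n$ is compact and $h$ is continuous on $\bar N = N_1 \supset N_n$, the supremum $a_n$ is attained. So for each $n$ there is a point $x_n \in N_n$ with $\abs{h(x_n)} = a_n \ge a$. All $x_n$ lie in the compact set $N_1$. Hence, by Bolzano--Weierstrass, a subsequence $x_{n_k}$ converges to some $x \in N_1$.

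The key step is to show that $x \in \bigcap_{m} N_m$. Fix $m$. For all $k$ with $n_k \ge m$, nestedness gives $x_{n_k} \in N_{n_k} \subset N_m$. Since $N_m$ is closed, the limit $x$ also lies in $N_m$. As $m$ was arbitrary, $x \in \bigcap_{m} N_m$, and by hypothesis $h(x) = 0$. On the other hand, continuity of $h$ on $N_1$ gives $\abs{h(x)} = \lim_k \abs{h(x_{n_k})} \ge a > 0$. This is a contradiction, so $a = 0$.

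I expect no serious obstacle. The only points needing care are:
\begin{itemize}
\item the supremum is attained (or one simply picks $x_n$ with $\abs{h(x_n)} \ge a/2$);
\item the limit point is placed in every $N_m$ by using closedness together with the tail of the subsequence;
\item the statement's identification $N = N_1 = \bigcup_n N_n$, which holds automatically by nestedness, guarantees that $h$ is defined and continuous on all the $N_n$.
\end{itemize}
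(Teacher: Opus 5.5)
Your proof is correct and follows the same route as the paper's: pick maximizers $x_n \in N_n$ and extract a convergent subsequence by Bolzano--Weierstrass. Closedness and nestedness then place the limit in $\bigcap_m N_m$, where continuity contradicts $h=0$; your monotonicity observation ($a_n$ nonincreasing) also justifies a step the paper leaves implicit, namely writing the negation of convergence as $\norm{h}_{\infty,N_n} > C$ for \emph{all} $n$ rather than for infinitely many $n$.
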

\begin{proof}
    Assume $\exists C >0$ s.t. $\forall n \in \mathbb N: ~ \norm{h}_{\infty, N_n} > C $. Then we find a sequence of points $x_n \in N_n$, s.t. $|h(x_n)| = \sup \limits _{x \in N_n} |h(x)| > C$. $(x_n)_{n \in \mathbb N} \subset N$ is a bounded sequence. Bolzano--Weierstrass gives us a converging subsequence $(x_{n_k})_{k \in \mathbb N}$ with limit $x_0$. For every $m \in \mathbb N$ we have $x_{n_k} \in N_{n_k} \subset N_m$ whenever $n_k \ge m$, and $N_m$ is closed, so $x_0 \in N_m$; hence $x_0 \in \bigcap \limits _{n \in \mathbb N} N_n $. By continuity of $h$, $|h(x_0)| = \lim_{k \to \infty} |h(x_{n_k})| \geq C >0 $, which contradicts $h \vert_{\bigcap_n N_n} = 0$.
\end{proof}

\theoremApproxmain*

\begin{proof}
    Fix $\varepsilon >0$.
    We begin by defining for $\lambda \geq 0$
    \[ 
    A_ \lambda := \{ x \in \Omega: ~ dist(x, \partial \Omega) > \lambda  \}
    \]
    $A_\lambda$ has the following properties:
    \begin{itemize}
        \item $A_ \lambda$ is open for all $\lambda \geq 0$
        \item $\exists \lambda > 0$  s.t. $A _ \lambda \neq \emptyset$
        \item If $A_\lambda \neq \emptyset$, then $A _ { \lambda \over 2} \supsetneq A_ \lambda$
    \end{itemize}
    Now we observe that for a fixed $\lambda >0$ s.t. $A_\lambda \neq \emptyset$, if we define $A^n := A_{\lambda \over {2^n}}$ and $N_n :=  \Bar{\Omega} \setminus A^n$ (compact, since $\Omega$ is bounded), then $(N_n)_{n \in \mathbb N}$ fulfills the conditions of Lemma \ref{lemmaForApprox} above (with $\bigcap_n N_n = \partial \Omega$) and thus we can find a $n \in \mathbb N$ s.t. 
    $ \norm{h}_{\infty, N_n}<\varepsilon$.
    Now we fix this $n$ and define 
    \[ \begin{cases*}
        A:= A^n \\
        B:= A^{n+1}
    \end{cases*}\]
    \\
    \\
    Define the function $f(x) = {h(x) \over g(x)} $ for $x\in \Omega$. Note that $f$ is continuous on $\Omega$, but not necessarily continuous on $\Bar \Omega$. 
    Now we take a continuous function $ \varphi \in C( \Bar \Omega)$, s.t. $0 \le \varphi \le 1$, $\varphi = 1$ on $A$ and $\varphi = 0$ on $ \Bar \Omega \setminus B$. Define the function 
    \[f_ \varepsilon := f \cdot \varphi \in C_0( \Bar \Omega) = \{ g \in C(\Bar \Omega): ~ g \vert _ {\partial \Omega} = 0  \} \subset  C( \Bar \Omega). \]
    The following simple estimate concludes the proof:
    \begin{eqnarray*}
        \norm{f_\varepsilon g -h} _ {\Omega} &\leq& \underbrace{\norm{f_\varepsilon g -h}_{A}}_{\substack{=\norm{(f_\varepsilon  -f)g}_{A} = 0}} +  \norm{f_\varepsilon g -h}_{\Omega \setminus A}\notag\\
        &\leq&  \norm{f_\varepsilon g}_ {\Omega \setminus A} + \underbrace{ \norm{h}_{\Omega \setminus A}}_{\leq \varepsilon}\notag \\
        &\leq&   \norm{f \varphi g}_ {\Omega \setminus A} +  \varepsilon \notag\\
        &= &    \norm{f \varphi g}_ {B \setminus A} +  \varepsilon \notag \\
        & \leq & \norm{fg}_{B \setminus A} +  \varepsilon \notag\\
        &=& \norm h _{B \setminus A} +  \varepsilon \notag\\
        &\leq& \norm h _{ \Omega \setminus A} +  \varepsilon \leq 2 \varepsilon
    \end{eqnarray*}
\end{proof}

\section{DeepONets}\label{sec:DeepOnets}
\textit{Deep Operator Networks (DeepONets)} are a neural network framework for learning operators between function spaces. They are based on the classical approximation result for operators by \cite{ChenChenApproxOP}.
\begin{thm}[\cite{ChenChenApproxOP}]
    \label{DeepONetChenChenThm}
    Suppose that
    \begin{itemize}
        \item $\sigma: \mathbb R\to \mathbb R$ is continuous and non-polynomial activation function,
        \item $X$ is a Banach space,
        \item $K_1 \subset X$, $K_2 \subset \mathbb R^d$ are both compact,
        \item $V\subset C(K_1)$ is compact,
        \item $G: V \to C(K_2)$ be a continuous (non-linear) operator,
    \end{itemize}
    then for all $\varepsilon >0$ there exists $n,~ p, ~ m \in \mathbb N$, $c_i^k, ~ \xi_{ij}^k, ~ \theta_i^k ,~ \zeta _ k \in \mathbb R$, $w_k \in  \mathbb R^d $, $x_j \in K_1$ such that for all $u  \in V$ and $y \in K_2$ we have that
    \[
    \Big|  G(u)(y)- \sum \limits _{k=1}^p \sum \limits _{i = 1}^n c_i^k \sigma  \Big( \sum \limits _{j=1}^m \xi_{ij}^k u(x_j) + \theta_i ^k  \Big) \sigma(w_k \cdot y + \zeta_k) \Big| < \varepsilon.
    \]
    The fixed points $x_j$ are called \textbf{sensor points}.
\end{thm}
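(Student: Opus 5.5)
The statement immediately above is the Chen--Chen theorem (Theorem \ref{DeepONetChenChenThm}), which the paper cites rather than proves. Our plan follows the classical two-stage argument: first approximate the operator by finitely many functionals times fixed functions of $y$, then approximate each functional by a one-hidden-layer network in the sensor values.

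\textbf{Step 1: reduce to finitely many scalar continuous functionals.} The set $W := G(V) \subset C(K_2)$ is compact, because $V$ is compact and $G$ is continuous. The family of ridge functions $y \mapsto \sigma(w\cdot y + \zeta)$ spans a dense subspace of $C(K_2)$, since $\sigma$ is non-polynomial (the Leshno--Lin--Pinkus--Schocken theorem). Compactness then gives uniformity over $W$: we fix an $\varepsilon/3$-net $\{G(u^{(1)}),\dots,G(u^{(M)})\}$ of $W$ and approximate each net element by a finite sum of ridge functions. Collecting all ridge functions used gives a finite set $\{\sigma(w_k\cdot y+\zeta_k)\}_{k=1}^p$. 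We then build a continuous partition of unity $\{\phi_l\}$ on $W$ subordinate to the balls around the net points, and set
\[
c_k(u) := \sum_l \phi_l\big(G(u)\big)\, a_{k}^{(l)},
\]
where $a_k^{(l)}$ are the ridge coefficients chosen for net point $l$. These $c_k$ are continuous functionals on $V$, and $\sup_{u,y}\big|G(u)(y) - \sum_k c_k(u)\,\sigma(w_k\cdot y+\zeta_k)\big| < 2\varepsilon/3$.

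\textbf{Step 2: approximate each functional by a network of sensor values.} Each $c_k$ must now be approximated uniformly on $V$ by an expression $\sum_i c_i^k \sigma\big(\sum_j \xi_{ij}^k u(x_j) + \theta_i^k\big)$. The key tool is the compactness of $V$ in $C(K_1)$. By Arzelà--Ascoli, $V$ is uniformly bounded and equicontinuous. We choose a $\delta$-net $\{x_1,\dots,x_m\}$ of $K_1$ and a partition of unity $\{\psi_j\}$ on $K_1$ subordinate to it, and define $T_\delta u := \sum_j u(x_j)\psi_j$. Equicontinuity gives $\sup_{u \in V}\|T_\delta u - u\|_\infty \to 0$ as $\delta \to 0$. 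The set $V \cup T_\delta(V)$, taken over a sequence $\delta \to 0$, is still compact, and $c_k$ extends continuously to it (Tietze/Dugundji, or by defining $c_k$ through $G$ extended continuously). Uniform continuity of $c_k$ on this compact set then gives $|c_k(u) - c_k(T_\delta u)|$ small uniformly in $u$. The map $(u(x_1),\dots,u(x_m)) \mapsto c_k(T_\delta u)$ is a continuous function on the compact set $\{(u(x_j))_j : u\in V\}\subset\mathbb R^m$. It can therefore be approximated uniformly by a one-hidden-layer $\sigma$-network, again by the universal approximation theorem for non-polynomial $\sigma$.

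\textbf{Step 3: combine.} We use a common $m$ and sensor set for all $k$ and take $n$ as the maximum number of hidden units needed. Multiplying the Step 2 error by $\max_k\|\sigma(w_k\cdot{}+\zeta_k)\|_{\infty,K_2}$ and summing over $k$, we choose the Step 2 tolerances so that this total is below $\varepsilon/3$. The triangle inequality then finishes the proof.

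The main obstacle is Step 2's extension issue: $c_k$ is defined only on $V$, while $T_\delta u$ need not lie in $V$. We would first try to avoid this by defining $c_k$ via an extension of $G$ to the compact set $K := V\cup\bigcup_{n} T_{\delta_n}V$ using Dugundji's extension theorem into $C(K_2)$, and then repeat Step 1 on this enlarged compact set from the start.
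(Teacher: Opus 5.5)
The paper gives no proof of this statement; it quotes it from Chen and Chen. Your argument is correct and is essentially the standard Chen--Chen proof. It has two stages: continuous coefficient functionals multiplying finitely many fixed ridge functions on $K_2$, then approximation of each continuous functional on $V$ through the interpolants $T_\delta u=\sum_j u(x_j)\psi_j$, the compact set $V\cup\bigcup_n T_{\delta_n}V$, and a Tietze extension. Your Step 1 builds the coefficient functionals by a partition of unity over a finite net of $G(V)$. Only their continuity is used later, so this is as good as any other construction. Invoking Leshno--Lin--Pinkus--Schocken for both density steps is exactly what the stated hypothesis (merely continuous and non-polynomial) requires; Chen and Chen phrase their result for Tauber--Wiener activations.

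Two details should be written out.

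\emph{Compactness of $K$.} Compactness of $K=V\cup\bigcup_n T_{\delta_n}V$ is not automatic for a countable union. Each $T_{\delta_n}$ is a linear contraction on $C(K_1)$, so each $T_{\delta_n}V$ is compact. This handles sequences that meet only finitely many of these sets infinitely often. For a sequence $T_{\delta_{n_k}}u_k$ with $n_k\to\infty$, pass to a subsequence with $u_k\to u\in V$. Then $\|T_{\delta_{n_k}}u_k-u\|_\infty\le\sup_{v\in V}\|T_{\delta_{n_k}}v-v\|_\infty+\|u_k-u\|_\infty\to 0$.

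\emph{The extension step.} The issue you flag as the main obstacle is already settled by the scalar Tietze theorem you mention in Step 2. $V$ is closed in the compact metric space $K$, and $K$ is built from the fixed nets and partitions of unity before any $c_k$ enters. So each $c_k$ extends to some $\tilde c_k\in C(K)$ with no circularity. Uniform continuity of the finitely many $\tilde c_k$ on $K$ then fixes a common $\delta_n$, hence a common sensor set. The Dugundji extension of $G$ and the re-run of Step 1 are therefore unnecessary, although they would also work.
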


\cite{DeepONetsOriginal} reused this idea to introduce what we now know as DeepONets.
Let us now give a precise definition of DeepONets. This definition is borrowed from \cite{errorEstimationDeepONet}.
\begin{Def}
    Let $D \subset \mathbb R^d$ be a compact set and $m, p \in \mathbb N$. Define the following operators
    \begin{itemize}
        \item \textbf{Encoder:} Given a set of sensor points $(x_j)_{j= 1}^m \subset D$, we define the linear mapping $\begin{cases}\mathcal E: C(D) \to \mathbb R^m, \\
        \mathcal{E} (u) = (u(x_1),\dots, u(x_m))
        \end{cases}$
        \item \textbf{Approximator:} Given the evaluation of the sensor points, the approximator is a neural network, that maps $ \mathcal A : \mathbb R ^m \to \mathbb R^p $. The composition $\mathcal A \circ  \mathcal E $ is called \textbf{Branch Net}.
        \item \textbf{Reconstructor:} We introduce a neural network
        $\begin{cases}
        \tau : \mathbb R^d \to \mathbb R^{p+1}\\
        y \mapsto (\tau_0(y), \dots , \tau_p(y))
        \end{cases}$ and call it the \textbf{Trunk Net}. Then the reconstructor is defined as $\begin{cases}
            \mathcal R = \mathcal R_\tau : \mathbb R^p \to C(D),\\
            \alpha \mapsto  \tau_0 (\cdot) + \sum  \limits _{k = 1}^p \alpha _k \tau_k (\cdot)  
        \end{cases}$.
    \end{itemize}
    Then the composition of branch net and reconstructor, $\mathcal R \circ \mathcal A \circ \mathcal E$, is called a \textbf{DeepONet}.
\end{Def}

In the original paper \cite{DeepONetsOriginal}, the authors propose two different implementations of the approximator. The first is the \textit{unstacked DeepONet}, where the approximator consists of only one neural network, as in the above definition, and the second is the \textit{stacked DeepONet}, where the approximator consists of one neural network for each output feature. More precisely, we get
\[
\mathcal A _{\text{stacked}}(u_1,\dots,u_m) = (\mathcal{A}_1(u_1,\dots,u_m), \dots, \mathcal{A}_p(u_1,\dots,u_m))
\]
where each $\mathcal A_i: \mathbb R^m \to \mathbb R$ is a neural network.
Although this is the version used in Theorem \ref{DeepONetChenChenThm}, the authors outline the drawbacks in computational complexity of the stacked version and even show in their experimental findings that the unstacked version outperforms the stacked version in terms of total error. Furthermore, it can be easily shown that we also get universality for the unstacked version, by using the fact that neural networks are universal approximators.
Furthermore, for an extensive error analysis for DeepONets we refer to \cite{errorEstimationDeepONet}.

\end{document}